\date{\today}
\newcommand\bC{{\mathbb C}}
\newcommand\bR{{\mathbb R}}
\newcommand\fa{{\mathfrak a}}
\newcommand\fg{{\mathfrak g}}
\newcommand\fh{{\mathfrak h}}
\newcommand\frk{{\mathfrak k}}
\newcommand\fl{{\mathfrak l}}
\newcommand\fp{{\mathfrak p}}
\newcommand\fq{{\mathfrak q}}
\newcommand\fs{{\mathfrak s}}
\newcommand\ft{{\mathfrak t}}
\newcommand\fu{{\mathfrak u}}
\newcommand\fz{{\mathfrak z}}
\newtheorem{theorem}{Theorem}[section]
\newtheorem{corollary}[theorem]{Corollary}
\newtheorem{definition}[theorem]{Definition}
\newtheorem{example}[theorem]{Example}
\newtheorem{lemma}[theorem]{Lemma}
\newtheorem{proposition}[theorem]{Proposition}
\newtheorem{remark}[theorem]{Remark}
\newcommand\im{{\operatorname{im}}}
\begin{document}
\title[Dirac series of $GL(n, \bR)$]{Dirac series of $GL(n, \bR)$}
\author{Chao-Ping Dong}
\author{Kayue Daniel Wong}

\address[Dong]{Mathematics and Science College, Shanghai Normal University, Shanghai 200234,
P. R. China}
\email{chaopindong@163.com}

\address[Wong]{School of Science and Engineering, The Chinese University of Hong Kong, Shenzhen,
Guangdong 518172, P. R. China}
\email{kayue.wong@gmail.com}

\begin{abstract}
The unitary dual of $GL(n, \bR)$ was classified by Vogan in the 1980s.  In particular, the Speh representations and the special unipotent representations are the building blocks of the unitary dual with half-integral infinitesimal characters. In this manuscript, we classify all irreducible unitary $(\fg, K)$-modules with non-zero Dirac cohomology for $GL(n, \bR)$, as well as a formula for (one of) their spin-lowest $K$-types. Moreover, analogous to the $GL(n,\bC)$ case given in \cite{DW1}, we count the number of the FS-scattered representations of $GL(n, \bR)$.
\end{abstract}

\subjclass[2010]{Primary 22E46.}

\keywords{Dirac cohomology, Speh representation, unitary representation.}

\maketitle
\setcounter{tocdepth}{1}

\section{Introduction}\label{sec:intro}
The notion of Dirac cohomology was first introduced by Vogan in the late 1990s. Since then, it is widely studied under different
perspectives in the representation theory of reductive Lie groups (and beyond). As an example, one of the interesting questions is
its relations with the unitary representations of real reductive Lie groups $G$. More precisely, one would like to classify
all irreducible unitary representations with non-zero Dirac cohomology for $G$. As coined by Huang, we call them the \emph{Dirac series} for $G$.
\smallskip

It is hoped that the Dirac series for $G$ will give a better idea on the structure of the whole unitary spectrum $\widehat{G}$ -- for instance, it is known that all $A_{\fq}(\lambda)$-modules with regular, integral infinitesimal character has non-zero Dirac cohomology. On the other hand, the work of Salamanca-Riba \cite{Sa} states that these modules cover the whole unitary spectrum under the assumption that its infinitesimal character is strongly regular. It would therefore be of interest to explore the Dirac series for $G$ without the regularity condition.
\smallskip

A complete classification of the Dirac series has been obtained for the following Lie groups: complex $F_4$ \cite{DD}, complex $E_6$ \cite{D1}; complex classical Lie groups \cite{BDW}; several real exceptional Lie groups  \cite{DDH,DDY}. Results on $U(p, q)$ have been recorded in \cite{DW2}. For instance, Example 6.3 of \cite{DDY} says that the Dirac series of split real $F_4$ go beyond \emph{elliptic representations}. Thus understanding Dirac series is interesting for representation theory in its own right.
\smallskip

This paper aims to classify the Dirac series for $GL(n, \bR)$.
Although the unitary dual of $GL(n, \bR)$ has been determined by Vogan \cite{V3} in the 1980s,
it is still worthwhile to get a thorough understanding of the $GL(n, \bR)$ case, since it will
be essential in understanding the Dirac series for other real reductive Lie groups.
Moreover, the recent research announcement \cite{BP2} of Barbasch and Pand\v zi\'c suggests
that Dirac cohomology could be used to construct automorphic forms. In the case of $GL(n,\bR)$,
representations in the Dirac series that are not of the form of $A_{\fq}(\lambda)$-modules
(a large class of $A_{\fq}(\lambda)$-modules are \emph{Speh modules}, given in Section \ref{sec-Speh-repn} below) shall have new implications in the theory automorphic forms.

\subsection{Dirac cohomology}
Let $G$ be a Lie group in the Harish-Chandra class \cite{HC} (e.g. $G = GL(n,\bR)$) with Cartan involution $\theta$ and maximal compact subgroup $K$, and $T$ be a maximal torus of $K$. Denote the Lie algebras of $G$, $K$, $T$, etc., by $\fg_0$, $\frk_0$ and $\ft_0$. We shall drop the subscript to stand for the corresponding complexified Lie algebra. Then we have Cartan decomposition $\fg = \mathfrak{k} + \fp$ and the Killing form $B$ of $[\fg,\fg]$. We extend $B$ to the center of $\fg$ so that it is non-degenerate, and still denote this extension by $B$. Then $B$ is positive definite on $\fp$. Let $\fa=Z_{\fp}(\ft)$, the commutator of $\ft$ in $\fp$. Then
$$
\fh=\ft+\fa
$$
is called a fundamental Cartan subalgebra of $\fg$. Let us denote by $\fh^*$ the complex linear dual of $\fh$. Denote by $W(\fg, \fh)$ the Weyl group of the root system $\Delta(\fg, \fh)$. Restricting the roots in $\Delta(\fg, \fh)$ onto $\ft$, we get the root system $\Delta(\fg, \ft)$ which may not be reduced. Let $W(\fg, \ft)$ be the corresponding Weyl group. Note that $W(\fg, \ft)$ is isomorphic to $W(\fg, \fh)^\theta$, the subgroup of $W(\fg, \fh)$ fixed by $\theta$. Fix a positive root system $\Delta^+(\frk, \ft)$, and denote the half sum of roots in it by $\rho_c$.

Let $Z_1, \dots, Z_s$ be an orthonormal basis of $\fp$ under the inner product induced by $B$. Then
$$
D=\sum_{i=1}^{s} Z_i\otimes Z_i \in U(\fg)\otimes C(\fp)
$$
is the \emph{Dirac operator} introduced by Parthasarathy \cite{P1}. Here $U(\fg)$ is the universal enveloping algebra of $\fg$,
and $C(\fp)$ is the Clifford algebra of $\fp$.
\smallskip

Let $X$ be a $(\fg, K)$-module and ${\rm Spin}$ be a spinor module for $C(\fp)$. Then
$X \otimes {\rm Spin}$ is an admissible $(U(\fg)\otimes C(\fp), \widetilde{K})$ module, where $\widetilde{K}$ is
the pin double cover of $K$ given by the pullback of the map:
\begin{equation} \label{eq-pullback}
\begin{CD}
\widetilde{K} @>>> {\rm Pin}(\fp_0) \\
@VVV @VV pr V \\
K @>>Ad> {\rm O}(\fp_0)
\end{CD},
\end{equation}
and $(k, s) \in \widetilde{K} \subset K \times {\rm Pin}(\fp_0)$ acts on $X \otimes {\rm Spin}$ by the diagonal action.
In particular, $D$ acts on $X\otimes {\rm Spin}$, and Vogan defined the \emph{Dirac cohomology} of $X$ \cite{V4} as
\begin{equation}\label{Dirac-cohomology}
H_D(X)=\ker D/(\ker D \cap \im D).
\end{equation}
The Dirac operator $D$ is independent of the choice of the orthonormal basis $\{Z_i\}_{i=1}^s$. Thus it commutes with every element in $\widetilde{K}$ and hence
$H_D(X)$ is an $\widetilde{K}$-module.

In the special case when $X$ is unitary,  the Dirac operator is self-adjoint with respect to a natural Hermitian inner product on $X\otimes {\rm Spin}$. In this case, Dirac cohomology has the following form
$$
H_D(X)=\ker D= \ker D^2.
$$
We refer the reader to Proposition 3.1.6 of \cite{HP2} for a nice formula of $D^2$, which was due to Parthasarathy \cite{P1}.

The Vogan conjecture, proved by Huang and Pand\v zi\'c \cite{HP1}, says that Dirac cohomology (whenever non-zero) is a finer invariant than infinitesimal character.
This result is foundational for the computation of Dirac cohomology. The paper \cite{HP1} deals with connected Lie groups.
Being in the Harish-Chandra class, $G$  may be disconnected and we need Theorem A of \cite{DH}, which slightly extends the main result of \cite{HP1}. 
\begin{theorem}\label{thm-HP}
Let $X$ be an irreducible $(\fg, K)$-module with infinitesimal character $\Lambda\in\fh^*$. If $H_D(X)$ is non-zero, let $\widetilde{\gamma}$ be any highest weight of any $\widetilde{K}$-type occurring in $H_D(X)$. Then there exists $w\in W(\fg, \fh)$ such that
\begin{equation}\label{inf-char-HP}
w \Lambda|_{\fa} = 0, \quad w \Lambda|_{\ft} = \widetilde{\gamma} + \rho_c.
\end{equation}
In particular, we can choose $\Lambda$ such that $\Lambda|_{\mathfrak{t}}$ is dominant for $\Delta^+(\frk, \ft)$, and choose $w$ from $W(\fg, \ft)$. 
\end{theorem}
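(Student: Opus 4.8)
The plan is to obtain Theorem \ref{thm-HP} from the algebraic core of the Vogan conjecture, as proved by Huang and Pand\v zi\'c \cite{HP1} and extended to the Harish-Chandra class by \cite{DH}. The \emph{key} is an identity inside $U(\fg)\otimes C(\fp)$ comparing the center $Z(\fg)$ with the center of the diagonal copy of $U(\frk)$. Let $\frk_\Delta\subset U(\fg)\otimes C(\fp)$ be the image of $\frk$ under $X\mapsto X\otimes 1+1\otimes q(X)$, where $q\colon\frk\to C(\fp)$ is built from the adjoint action of $\frk$ on $\fp$, and let $Z(\frk_\Delta)$ be the image of $Z(\frk)$. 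Since $D$ is $\widetilde{K}$-invariant, $D$ commutes with $\frk_\Delta$, hence both $Z(\frk_\Delta)$ and $Z(\fg)\otimes 1$ commute with $D$ and therefore act on $H_D(X)=\ker D/(\ker D\cap\im D)$. Recalling Parthasarathy's formula for $D^{2}$ in terms of Casimir elements (Proposition 3.1.6 of \cite{HP2}, cf.\ \cite{P1}), \cite{HP1} refines it to the statement that for every $z\in Z(\fg)$ there are $\zeta(z)\in Z(\frk_\Delta)$ and $a\in U(\fg)\otimes C(\fp)$ with
\begin{equation*}
z\otimes 1=\zeta(z)+Da+aD,
\end{equation*}
and, crucially, identifies the algebra map $\zeta\colon Z(\fg)\to Z(\frk)$: under the Harish-Chandra isomorphisms $Z(\fg)\cong\bC[\fh^{*}]^{W(\fg,\fh)}$ and $Z(\frk)\cong\bC[\ft^{*}]^{W(\frk,\ft)}$, normalized so that the parametrization is by $W$-orbits, $\zeta$ becomes the restriction along $\iota\colon\ft^{*}\hookrightarrow\fh^{*}$ (extension by zero on $\fa$). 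One checks this lands in the $W(\frk,\ft)$-invariants because already $W(\fg,\ft)=W(\fg,\fh)^{\theta}$ preserves the decomposition $\fh=\ft\oplus\fa$, so a $W(\fg,\fh)$-invariant restricts to a $W(\fg,\ft)$-invariant on $\ft^{*}$.

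Granting this, the deduction is short. If $v\in\ker D$ represents a class in $H_D(X)$, then $(z\otimes 1)v=\zeta(z)v+D(av)+a(Dv)=\zeta(z)v+D(av)$ with $D(av)\in\ker D\cap\im D$, so $z\otimes 1$ and $\zeta(z)$ induce the same operator on $H_D(X)$. Now $z$ acts on $X$ by the scalar $\chi_{\Lambda}(z)$, hence $z\otimes 1$ acts on $X\otimes{\rm Spin}$, and on $H_D(X)$, by $\chi_{\Lambda}(z)$; on the other hand, on a $\widetilde{K}$-type of $H_D(X)$ with highest weight $\widetilde{\gamma}$ (w.r.t.\ the fixed $\Delta^{+}(\frk,\ft)$) the algebra $Z(\frk_\Delta)$ acts via the derived $\frk$-action by the Harish-Chandra character $\chi^{\frk}_{\widetilde{\gamma}+\rho_c}$, since $\widetilde{\gamma}+\rho_c$ is the $\frk$-infinitesimal character of that module. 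Comparing, $\chi_{\Lambda}(z)=\chi^{\frk}_{\widetilde{\gamma}+\rho_c}(\zeta(z))$ for all $z\in Z(\fg)$; translating through the Harish-Chandra isomorphisms and the description of $\zeta$, the $W(\fg,\fh)$-invariant polynomial attached to $z$ takes the same value at $\Lambda$ and at $\iota(\widetilde{\gamma}+\rho_c)$, for every $z$. By Chevalley's restriction theorem these two points of $\fh^{*}$ lie in a single $W(\fg,\fh)$-orbit, \ie there is $w\in W(\fg,\fh)$ with $w\Lambda=\iota(\widetilde{\gamma}+\rho_c)$, which is exactly \eqref{inf-char-HP}.

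For the ``in particular'' I would replace $\Lambda$ by the conjugate $w\Lambda$ just produced, so that $\Lambda|_{\fa}=0$, and then act further by $W(\frk,\ft)\subset W(\fg,\ft)$ --- which preserves the condition $\Lambda|_{\fa}=0$ --- to make $\Lambda|_{\ft}$ dominant for $\Delta^{+}(\frk,\ft)$. For such a $\Lambda$, both $\Lambda$ and $\iota(\widetilde{\gamma}+\rho_c)$ lie in $\ft^{*}\subset\fh^{*}$, hence are $\theta$-fixed, and $\widetilde{\gamma}+\rho_c$ is regular dominant for $\Delta^{+}(\frk,\ft)$; the standard fact that two $\theta$-fixed weights conjugate under $W(\fg,\fh)$ are already conjugate under $W(\fg,\fh)^{\theta}=W(\fg,\ft)$ (in this fundamental-Cartan setting) then lets us take $w\in W(\fg,\ft)$, as recorded in Theorem A of \cite{DH}.

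The main obstacle is precisely the content of the first paragraph: establishing the Huang--Pand\v zi\'c identity and, above all, identifying $\zeta$ with restriction of invariants. This is the heart of \cite{HP1}; it rests on the fact that $d=\ad(D)$ makes $\bigl(U(\fg)\otimes C(\fp)\bigr)^{\widetilde{K}}$ into a differential (super)algebra whose cohomology is exactly $Z(\frk_\Delta)$, together with a filtration and leading-term computation matching $\zeta$ against restriction. For disconnected $G$ the only additional point is to run this argument for the possibly disconnected $K$ and $\widetilde{K}$ and to supply the $\theta$-equivariance in the last step, which is what \cite{DH} provides; accordingly, in the body of the paper this theorem is simply invoked from \cite{HP1,DH}.
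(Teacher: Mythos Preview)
Your proposal is correct and, as you yourself note in the last sentence, it amounts to a faithful outline of the Huang--Pand\v zi\'c argument in \cite{HP1} together with its extension in \cite{DH}; the paper does not supply its own proof of Theorem \ref{thm-HP} but simply invokes it from these references, so there is nothing further to compare. The only place where your sketch is slightly informal is the ``standard fact'' that two $\theta$-fixed weights in the same $W(\fg,\fh)$-orbit are already $W(\fg,\fh)^{\theta}$-conjugate; this needs a short argument (or the reference to \cite{DH} you already give), since it is not automatic for arbitrary group actions with involution.
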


\subsection{Structure theory of $GL(n,\bR)$}
We will focus on $G = GL(n,\bR)$ henceforth. Let $\theta: GL(n, \bR)\to GL(n, \bR)$ be given by
$$
\theta(g)=(g^{-1})^t,
$$
the inverse transpose of $g\in GL(n, \bR)$. This is the Cartan involution of $GL(n, \bR)$ with
$K=G^{\theta} = O(n)$, whose identity component is $SO(n)$. Embed
$$
T_0=SO(2)\times SO(2) \times \cdots \times SO(2)  \quad (m \mbox{ copies})
$$
into $K$ in the standard way. $T_0$ is a maximal torus of $SO(n)$. The matrix
$$
r_n={\rm diag} \{1, \dots, 1, -1\} \quad (n \mbox{ entries})
$$
normalizes $T_0$.
The Cartan subgroup of $K$ is $T=T_0 \rtimes \{1, r_n\}$.


\subsubsection{Vogan Diagrams}
The semi-simple factor of $\fg$ is $[\fg, \fg]=\fs\fl(n, \bR)$. Let us adopt \emph{the} Vogan diagrams for  $\fs\fl(2m+\epsilon,\bR)$ as in Appendix C of Knapp \cite{Kn}.  They are shown in Table \ref{Fig-sl-2n-Vogan} and Table \ref{Fig-sl-2n+1-Vogan}.  By fixing the Vogan diagrams, we have actually fixed a choice of
$$
\Delta^+(\fg, \ft)=\Delta^+(\frk, \ft) \cup \Delta^+(\fp, \ft).
$$
Let us denote the half sum of roots in $\Delta^+(\fg, \ft)$ by $\rho$. 

When $\epsilon=0$,
$$
\Delta^+(\frk, \ft)=\{e_i\pm e_j \mid 1\leq i<j\leq m\}, \quad
\Delta^+(\fp, \ft)=\Delta^+(\frk, \ft)\cup \{2e_i\mid 1\leq i\leq m\}.
$$
Therefore,
$$
\rho_c=(m-1, m-2, \dots, 0), \quad \rho-\rho_c=(m, m-1, \dots, 1) \in \ft^*.
$$
The Weyl group $W(\fg, \ft)=W(\frk, \ft)\cup s_{2e_m} W(\frk, \ft)$.

\begin{figure}[H]
\centering
\scalebox{0.6}{\includegraphics{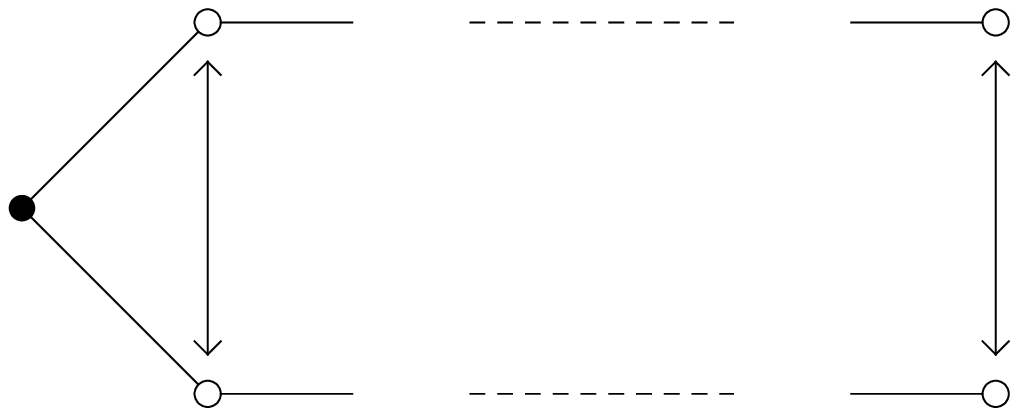}}
\caption{The Vogan diagram for $\fs\fl(2m,\bR)$}
\label{Fig-sl-2n-Vogan}
\end{figure}

When $\epsilon=1$,
$$
\Delta^+(\frk, \ft)=\{e_i\pm e_j \mid 1\leq i<j\leq m\}\cup\{e_1, \dots, e_m\}
$$
and
$$
\Delta^+(\fp, \ft)=\Delta^+(\frk, \ft)\cup \{2e_i\mid 1\leq i\leq m\}.
$$
Therefore,
$$
\rho_c=(m-\frac{1}{2}, m-\frac{3}{2}, \dots, \frac{1}{2}), \quad \rho-\rho_c=(m+\frac{1}{2}, m-\frac{1}{2}, \dots, \frac{3}{2}) \in \ft^*.
$$
The Weyl group $W(\fg, \ft)=W(\frk, \ft)$.

\begin{figure}[H]
\centering
\scalebox{0.6}{\includegraphics{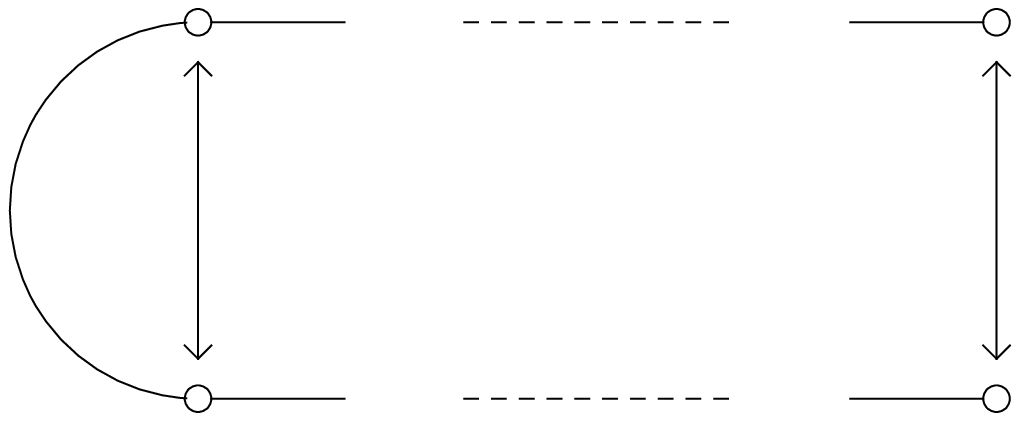}}
\caption{The Vogan diagram for $\fs\fl(2m+1,\bR)$}
\label{Fig-sl-2n+1-Vogan}
\end{figure}

\subsubsection{$K$-types} \label{sec-ktypes}
By the Cartan-Weyl theorem, the equivalence class of irreducible representations of $K = O(n)$ (denoted as $\widehat{K}$)
can be parametrized by irreducible representations of $T$ whose differential is a sum of dominant weights.
Under this point of view, $\widehat{K}$ can be parametrized by pairs $(\gamma; \eta)$, where $\gamma=(\gamma_1, \dots, \gamma_m)$ is a decreasing sequence of non-negative integers (c.f. Proposition 5.7 of \cite{V3}). Moreover, if $n$ is even and $\gamma_m>0$, then $\eta$ is $\frac{1}{2}$; otherwise, $\eta$ can be $0$ or $1$.

Denote by $E_{(\gamma; \eta)}$ the $K$-type parametrized by the pair $(\gamma; \eta)$. If $\eta=0$ or $1$, then the restriction of $E_{(\gamma; \eta)}$ to $SO(n)$ is the irreducible representation with highest weight $\gamma$. If $\eta=\frac{1}{2}$, then the restriction of $E_{(\gamma; \eta)}$ to $SO(n)$ is the sum of two irreducible representations with highest weights $\gamma$ and $(\gamma_1, \dots, \gamma_{m-1}, -\gamma_m)$.

For a vector consisting of integer coordinates, we will refer to the parity of the sum of its coordinates as the \emph{parity} of the vector. In particular,  the parity of the $K$-type $E_{(\gamma; \eta)}$ is defined as that of $\gamma$.

\subsubsection{PRV component}
Since we are interested in the Dirac series for $G = GL(n,\bR)$, it suffices to study
Dirac cohomology for all unitary $(\fg,K)$-modules $X$. Suppose $\widetilde{\gamma}$ is the highest weight
of a $\widetilde{K}$-type appearing in $H_D(X)$. By Parthasarathy's Dirac operator inequality \cite{P2},
it must come from the \emph{PRV component} \cite{PRV} of a $K$-type $E_{(\tau;\eta)}$ in $X$
tensored with $\mathrm{Spin}$ which we describe below.

The spinor module $\mathrm{Spin}$ has a natural $\frk$-module structure given by $\frk \stackrel{\mathrm{ad}}{\longrightarrow} \mathfrak{spin}(\fp) \hookrightarrow C(\fp),$ whose highest weight(s) are of the form:
\begin{equation} \label{eq-sigma}
\sigma = \begin{cases} (m,m-1,\dots,\pm 1) & \text{when}\ n = 2m\ \text{is even}\\
(m+1/2,m-1/2,\dots, 3/2) & \text{when}\ n = 2m+1\ \text{is odd}
\end{cases}.
\end{equation}
Then the PRV component of $E_{(\tau;\eta)} \otimes \mathrm{Spin}$
has highest weight(s) equal to $\{\tau - \sigma\}$, where $\{\chi\}$ is the unique $W(\mathfrak{k},\ft)$-conjugate of
$\chi$ which is $\Delta^+(\mathfrak{k},\ft)$-dominant. In this special case, Theorem \ref{thm-HP} can be rephrased as follows.

\begin{corollary} \label{cor-HP}
Let $G = GL(n,\mathbb{R})$ and $X$ be a unitary $(\fg,K)$-module with infinitesimal character $\Lambda \in \fh^*$. Then
$H_D(X)$ is non-zero if and only if there exists a $w\in W(\fg, \fh)$, and a $K$-type in $X$ with highest weight $\tau$ such that
\begin{equation}  \label{eq-HP2}
w\Lambda|_{\fa} = 0, \quad \{\tau - \sigma\} = w\Lambda|_{\ft} - \rho_c.
\end{equation}
Moreover, the $K$-types satisfying \eqref{eq-HP2} are the {\bf spin lowest $K$-types} of $X$ in the sense of the first named author's thesis. That is, the norm $\|\{\tau-\sigma\}+\rho_c\|$ attains the minimum value among all the $K$-types of $X$.
\end{corollary}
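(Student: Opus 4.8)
The plan is to deduce Corollary~\ref{cor-HP} from Theorem~\ref{thm-HP} together with two facts recalled above: Parthasarathy's formula for $D^2$ (Proposition 3.1.6 of \cite{HP2}) and Parthasarathy's Dirac operator inequality \cite{P2}. First I would extract the relevant ``dictionary''. Because $X$ has infinitesimal character $\Lambda$, the $D^2$ formula shows that on the $\widetilde K$-isotypic component of $X\otimes\mathrm{Spin}$ with highest weight $\widetilde\gamma$ the operator $D^2$ acts by the scalar $\|\widetilde\gamma+\rho_c\|^2-\|\Lambda\|^2$. As $X$ is unitary, $D^2$ is positive semidefinite and $H_D(X)=\ker D=\ker D^2$; hence a $\widetilde K$-type $\widetilde\gamma$ occurs in $H_D(X)$ if and only if it occurs in $X\otimes\mathrm{Spin}$ and $\|\widetilde\gamma+\rho_c\|=\|\Lambda\|$, and in particular $\|\widetilde\gamma+\rho_c\|\ge\|\Lambda\|$ for every $\widetilde K$-type $\widetilde\gamma$ of $X\otimes\mathrm{Spin}$.

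The second ingredient is the role of the PRV component. Every $\widetilde K$-type of $X\otimes\mathrm{Spin}$ occurs inside $E_{(\tau;\eta)}\otimes\mathrm{Spin}$ for some $K$-type $E_{(\tau;\eta)}$ of $X$; the PRV component of $E_{(\tau;\eta)}\otimes\mathrm{Spin}$, of highest weight $\{\tau-\sigma\}$, always occurs there; and among all $\widetilde K$-types of $E_{(\tau;\eta)}\otimes\mathrm{Spin}$ it minimizes $\|\,\cdot\,+\rho_c\|$. Feeding this into the dictionary gives, for every $K$-type $\tau$ of $X$, the inequality $\|\{\tau-\sigma\}+\rho_c\|\ge\|\Lambda\|$, with equality if and only if $\{\tau-\sigma\}$ occurs in $H_D(X)$.

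Now I would assemble the Corollary. $(\Leftarrow)$ If \eqref{eq-HP2} holds for some $w\in W(\fg,\fh)$ and some $K$-type $\tau$ of $X$, then using the orthogonal decomposition $\fh=\ft\oplus\fa$ and $w\Lambda|_\fa=0$ we get $\|\{\tau-\sigma\}+\rho_c\|=\|w\Lambda|_\ft\|=\|w\Lambda\|=\|\Lambda\|$, so $\{\tau-\sigma\}$ occurs in $H_D(X)$ and $H_D(X)\ne 0$. $(\Rightarrow)$ If $H_D(X)\ne 0$, pick any $\widetilde K$-type $\widetilde\gamma$ of $H_D(X)$; it lies in some $E_{(\tau;\eta)}\otimes\mathrm{Spin}$, and PRV-minimality with the dictionary forces $\|\{\tau-\sigma\}+\rho_c\|\le\|\widetilde\gamma+\rho_c\|=\|\Lambda\|$, hence $\|\{\tau-\sigma\}+\rho_c\|=\|\Lambda\|$, so $\{\tau-\sigma\}$ occurs in $H_D(X)$; applying Theorem~\ref{thm-HP} to the highest weight $\{\tau-\sigma\}$ produces $w\in W(\fg,\fh)$ with $w\Lambda|_\fa=0$ and $w\Lambda|_\ft=\{\tau-\sigma\}+\rho_c$, which is \eqref{eq-HP2} for this $\tau$. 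The same reasoning shows that, for a $K$-type $\tau$ of $X$, condition \eqref{eq-HP2} holds $\iff$ $\{\tau-\sigma\}$ occurs in $H_D(X)$ $\iff$ $\|\{\tau-\sigma\}+\rho_c\|=\|\Lambda\|$; by the previous paragraph $\|\Lambda\|$ is the minimum of $\|\{\tau-\sigma\}+\rho_c\|$ over all $K$-types $\tau$ of $X$, and it is attained precisely when \eqref{eq-HP2} is solvable, which is the situation of the ``moreover'' clause. Hence the $K$-types satisfying \eqref{eq-HP2} are exactly the spin lowest $K$-types of $X$.

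I expect the one genuinely delicate point to be the PRV-minimality claim — that $\{\tau-\sigma\}$ minimizes $\|\,\cdot\,+\rho_c\|$ among the $\widetilde K$-types of $E_{(\tau;\eta)}\otimes\mathrm{Spin}$ — together with the related bookkeeping of making sure the $W(\frk,\ft)$-dominant representative $\{\tau-\sigma\}$, rather than a $\rho_c$-shifted variant, is the quantity that enters Parthasarathy's inequality. This is a root-theoretic statement about the spin module for $\frk$ of type $\mathfrak{so}(n)$ with highest weight $\sigma$ as in \eqref{eq-sigma}, and I would either cite it from the Dirac cohomology literature or verify it directly from the explicit $\sigma$; everything else is formal manipulation with Theorem~\ref{thm-HP} and the $D^2$ formula and uses no structure theory of $GL(n,\bR)$ beyond the orthogonal splitting $\fh=\ft\oplus\fa$.
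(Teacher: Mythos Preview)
Your argument is correct and is precisely the fleshed-out version of what the paper indicates: the paper does not give a separate proof of Corollary~\ref{cor-HP} but simply notes before stating it that any $\widetilde K$-type in $H_D(X)$ must, by Parthasarathy's Dirac inequality, arise as the PRV component $\{\tau-\sigma\}$ of some $E_{(\tau;\eta)}\otimes\mathrm{Spin}$, and then declares the corollary to be the resulting rephrasing of Theorem~\ref{thm-HP}. Your use of the $D^2$-formula and the PRV-minimality of $\|\{\tau-\sigma\}+\rho_c\|$ is exactly the mechanism behind that sentence, and your identification of the delicate point (PRV-minimality for the spin module) is apt.
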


We look more closely to \eqref{eq-HP2}. Suppose $X$ is a unitary $(\fg,K)$-module satisfying Corollary \ref{cor-HP},
whose infinitesimal character $\Lambda =  (\lambda_1, \dots, \lambda_n) \in \mathfrak{h}^*$
is chosen to be $\Delta^+(\fg,\fh)$-dominant. By our choice of the Vogan diagrams above, we have
\begin{equation} \label{eq-restriction}
\begin{aligned}
\Lambda|_{\ft} &= (\lambda_1 - \lambda_n, \lambda_2 - \lambda_{n-1}, \dots, \lambda_m - \lambda_{n-m+1})
\\
\Lambda|_{\fa} &= (\lambda_1 + \lambda_n, \lambda_2 + \lambda_{n-1}, \dots, \lambda_m + \lambda_{n-m+1}, \zeta)
\end{aligned}
\end{equation}
where $\zeta = \lambda_{m+1}$ if $n = 2m+1$ is odd, and the $\zeta$ term is omitted if $n = 2m$ is even.
Then the first part of \eqref{eq-HP2} implies that
$\lambda_i = -\lambda_{n+1-i}$ for all $1 \leq i \leq m$, and $\lambda_{m+1} = 0$ when $n$ is odd.
Therefore, $\Lambda|_{\ft} = (2\lambda_1, \dots, 2\lambda_m) = \{\tau - \sigma\} + \rho_c$
by the second part of \eqref{eq-HP2}.

If $n$ is even, then the coordinates of $\{\tau - \sigma\}$ and $\rho_c$
are all integral. Otherwise, if $n$ is odd, then the coordinates of $\{\tau - \sigma\}$ and $\rho_c$
are all half-integral. In both cases, the coordinates of $\Lambda|_{\ft}$ must be integral for all $n$.

Consequently, in the classification of
the Dirac series for $GL(n, \bR)$, it suffices to focus on infinitesimal
characters $\Lambda \in \mathfrak{h}^*$ of the form $\Lambda|_{\fa} = 0$ and
\begin{equation} \label{eq-HP}
\Lambda|_{\ft} = (2\lambda_1, \dots, 2\lambda_{m})
\end{equation}
where $2\lambda_1 > \dots > 2\lambda_m $ is a sequence of positive (or non-negative) integers when $n$ is odd (or even).
By abuse of notations, we will denote the infinitesimal character $\Lambda \in \fh^*$
by its restriction $\Lambda|_{\ft}$ to $\ft^*$ from now on.

\subsection{Unitary representations of $GL(n,\bR)$}
To achieve the goal of classifying the Dirac series
of $GL(n,\mathbb{R})$, one needs a good understanding of the
unitary dual of $GL(n,\mathbb{R})$, whose building blocks will be recalled
from \cite{V3}.

\subsubsection{Special unipotent representations}
Special unipotent representations play an important role in the study of the unitary dual of reductive Lie groups.
In the $GL(n,\mathbb{R})$ case, we have a nice description of these representations. Namely, they are of the form
\begin{equation} \label{eq-unip}
\pi_u := {\rm Ind}_{\prod_{i=1}^s GL(r_i,\mathbb{R})}^{GL(r,\mathbb{R})}(\bigotimes_{i=1}^s \varphi_i), \quad \varphi_i = \mathrm{triv}\ \text{or}\ \mathrm{sgn}(\det),
\end{equation}
where the Levi factors can be chosen such that $r_1 \geq r_2 \geq \dots \geq r_s\geq 1$, and that $r=\sum_{i=1}^{s}r_i$.
Note that the infinitesimal character of $\pi_u$ is equal to $\Lambda_u = (\chi_1;\dots;\chi_s) \in \ft^*$ (see paragraph after \eqref{eq-HP}), where
$$
\chi_i := (r_i-1, r_i-3, \dots, \epsilon +2 , \epsilon),
$$
where $\epsilon \in \{0,1\}$ satisfies $r_i - 1 \equiv \epsilon (\text{mod}\ 2)$.
Note that these representations are unitary by construction, and \cite{V3} gave a detailed account on its irreducibility.

\subsubsection{Cohomological induction}

Let $\fq=\fl+\fu$ be a $\theta$-stable parabolic subalgebra of $\fg$. Here we arrange the positive roots so that
$$
\Delta^+(\fl, \fh)=\Delta(\fl, \fh)\cap \Delta^+(\fg, \fh), \quad \Delta(\fu, \fh)\subseteq \Delta^+(\fg, \fh).
$$
Denote by $\rho_{L}$ (resp., $\rho({\fu})$)  the half-sum of roots in $\Delta^+(\fl, \fh)$ (resp., $\Delta(\fu, \fh)$).

Let $L$ be the normalizer of $\fq$ in $GL(n, \bR)$. Let $\fz$ be the center of $\fl$. Given an $(\fl, L\cap K)$-module $Z$ with infinitesimal character $\lambda_L$, the cohomological induction functors $\mathcal{R}^{j}(\cdot)$ produce $(\fg, K)$-modules $\mathcal{R}^j(Z)$ for integers $j\geq 0$.
We say that $Z$ is \emph{weakly fair} if
\begin{equation}\label{weakly-fair}
{\rm Re} \langle \lambda_L + \rho(\fu), \alpha|_{\fz} \rangle \ge 0, \quad \forall \alpha\in \Delta(\fu, \fh).
\end{equation}
If all the inequalities in \eqref{weakly-fair} are strict, then $Z$ is said to be in the \emph{fair} range. We say that $Z$ is \emph{weakly good} if
\begin{equation}\label{weakly-good}
{\rm Re} \langle \lambda_L + \rho(\fu), \alpha \rangle \ge 0, \quad \forall \alpha\in \Delta(\fu, \fh).
\end{equation}
If all the inequalities in \eqref{weakly-good} are strict, then $Z$ is said to be in the \emph{good} range.
By \cite{KnV}, suppose $Z$ is a unitary $(\fl, K \cap L)$-module
in the weakly good range, then we have $\mathcal{R}^j(Z) = 0$ for all
$j \neq S = \dim(\fu \cap \fp)$, and $\mathcal{R}^S(Z)$ is unitary irreducible (or zero) with
infinitesimal character equal to $\lambda_L + \rho(\fu)$.

In the special case that $Z=\bC_{\lambda}$ is a one-dimensional unitary character of $L$, we shall call $\mathcal{R}^S(Z)$ an $A_{\fq}(\lambda)$ module. Then the unitary criterion on the previous paragraph can be relaxed to weakly fair range for these modules. We can further arrange that $\lambda_L=\lambda + \rho_{L}$, so that \eqref{weakly-fair} is equivalent to
\begin{equation}\label{weakly-fair-2nd}
{\rm Re} \langle \lambda + \rho(\fu), \alpha \rangle \ge 0, \quad \forall \alpha\in \Delta(\fu, \fh).
\end{equation}
We refer the reader to \cite{KnV} for more materials on cohomological induction.

\subsubsection{Speh representations}\label{sec-Speh-repn}

Another interesting collection of unitary representations of $GL(2k,\mathbb{R})$ are the {\bf Speh representations}. We recall its construction using
the language of cohomological induction.

Let $\lambda = (\frac{b}{2}, \dots, \frac{b}{2}, -\frac{b}{2}, \dots,-\frac{b}{2}) \in \mathfrak{h}^*$ for
integers $b \geq -k$. Then the Speh representations are given by
\begin{equation}\label{Speh-module}
S_{k, b+1} := A_{\mathfrak{q}}(\lambda), \quad \text{where}\ \mathfrak{q} = \mathfrak{l} + \mathfrak{u}\ \text{with}\ \mathfrak{l} =\mathfrak{gl}(k,\mathbb{C}).
\end{equation}
The inequality $b \geq -k$  guarantees that $S_{k, b+1}$ is in the weakly fair range. Indeed, as carefully illustrated in Example 1 on page 586 of \cite{KnV},
$
\lambda + \rho(\fu)=(b+k, b+k, \dots, b+k)\in\ft^*.
$
Then \eqref{weakly-fair-2nd} says that $S_{k, b+1}$ is weakly fair precisely when $b+k\geq 0$.

The infinitesimal character of $S_{k, b+1}$ is equal to
\begin{equation} \label{eq-spehinfl}
\Lambda_s = (b+2k-1, \dots, b+3, b+1) \in \ft^*.
\end{equation}
Moreover, the lowest $K$-type of $S_{k, b+1}$ is equal to $E_{(b+k+1,\dots, b+k+1; \frac{1}{2})}$, and the $K$-type decomposition of
$S_{k, b+1}$ is equal to
\begin{equation}\label{K-Speh}
S_{k, b+1}|_K = \bigoplus_{a_1 \geq \dots \geq a_k \geq 0} E_{(b+k+1+2a_1, \dots, b+k+1+2a_k;\frac{1}{2})}.
\end{equation}
Note that for any $K$-type $E_{(\gamma; \eta)}$ in $S_{k, b+1}$,
the vectors $\{\gamma -\sigma\}$ and $\Lambda_s-\rho_c$ have the same parity.

\subsection{Unitary dual of $GL(n,\mathbb{R})$ with half-integral infinitesimal characters}
As deduced in \eqref{eq-HP}, it suffices to go through all the irreducible unitary representations of $GL(n,\mathbb{R})$ with half-integral infinitesimal characters to sieve out all the Dirac series. This part of the unitary dual does
not contain any Stein complementary series, and it can be cut out as follows.

\begin{theorem}\emph{(Vogan \cite{V3})}
Any irreducible unitary representation of $GL(n,\bR)$ whose infinitesimal character $\Lambda \in \mathfrak{h}^*$
satisfies \eqref{eq-HP2} must be of the form
\begin{equation} \label{eq-unitary2}
\mathrm{Ind}_{\prod_{i=1}^t GL(2k_i,\mathbb{R}) \times GL(r,\mathbb{R})}^G(\bigotimes_{i=1}^t S_{k_i,m_i+1} \otimes \pi_u)
\end{equation}
where $S_{k_i, m_i+1} = A_{\mathfrak{q}_i}(\lambda_i)$ are Speh representations given by \eqref{Speh-module},
and $\pi_u$ is a special unipotent representation of $GL(r, \bR)$ given in \eqref{eq-unip}.
\end{theorem}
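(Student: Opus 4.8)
The plan is to combine Vogan's classification of the unitary dual of $GL(n,\bR)$ \cite{V3} with the infinitesimal-character constraint \eqref{eq-HP2} extracted in Corollary \ref{cor-HP}. Recall from Vogan's work that every irreducible unitary representation of $GL(n,\bR)$ is unitarily induced from a product of $GL(1,\bR)$-characters (contributing Stein complementary series and one-dimensional characters), one-dimensional characters of $GL(2,\bR)$ twisted into complementary series, and the ``building block'' pieces: Speh representations $S_{k,m+1}$ of $GL(2k,\bR)$ and their complementary-series deformations, together with special unipotent representations. So the first step is to write an arbitrary irreducible unitary $\pi$ of $GL(n,\bR)$ in this induced form $\mathrm{Ind}_P^G(\bigotimes_j \delta_j \otimes \nu_j)$, where each $\delta_j$ is a building block carrying its own infinitesimal character and $\nu_j$ is a (possibly zero) continuous parameter.

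Next I would compute the infinitesimal character of such an induced module as the disjoint union (with multiplicity) of the infinitesimal characters of the inducing factors, each shifted by the corresponding $\nu_j$. The key observation, already recorded in the paragraph following \eqref{eq-HP}, is that once we impose $w\Lambda|_{\fa}=0$ and \eqref{eq-HP}, the infinitesimal character $\Lambda$ is forced to have all coordinates \emph{integral}, with $\Lambda|_{\ft}=(2\lambda_1,\dots,2\lambda_m)$. In particular $\Lambda$ is half-integral in Vogan's normalization, and there is no room for a nonzero continuous deformation: any factor $\delta_j\otimes\nu_j$ with $\nu_j\neq 0$ would contribute a coordinate of $\Lambda$ that is not integral (the Stein complementary series of $GL(2,\bR)$ and the complementary-series deformations of Speh representations are precisely the deformations that destroy integrality). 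Hence every $\nu_j=0$, which already rules out all Stein complementary series and all $GL(2,\bR)$ complementary series, and collapses the remaining factors to their endpoints.

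The third step is bookkeeping on the surviving factors. After killing the continuous parameters, each $GL(1,\bR)$-factor must be $\mathrm{triv}$ or $\mathrm{sgn}$, i.e.\ a special unipotent factor with $r_i=1$; each $GL(2k_i,\bR)$-factor is an (undeformed) Speh representation $S_{k_i,m_i+1}$; and every other factor is a special unipotent representation $\pi_u$ of the form \eqref{eq-unip}. Grouping the Speh factors as $\bigotimes_{i=1}^t S_{k_i,m_i+1}$ and amalgamating all the unipotent factors into a single $\pi_u$ of $GL(r,\bR)$ (which is legitimate since the induced module is built by induction in stages, and a product of special unipotent representations of $GL(r_i,\bR)$ is again of the form \eqref{eq-unip} up to reordering the $r_i$) yields exactly the shape \eqref{eq-unitary2}, with $n=\sum_{i=1}^t 2k_i + r$. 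Finally, one checks conversely that every module of the form \eqref{eq-unitary2} is unitary (it is unitarily induced from unitary pieces) and has half-integral infinitesimal character, so the description is exact.

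The main obstacle I anticipate is the second step: making precise \emph{which} deformations in Vogan's parametrization are excluded by integrality, and handling the subtlety that a single ``building block'' in Vogan's list for $GL(n,\bR)$ can itself be an induced object (e.g.\ a ladder/Speh-type module tensored with a complementary-series parameter), so that one must carefully track how the continuous parameter enters each coordinate of $\Lambda$ and verify that it cannot be absorbed by a Weyl-group element $w$ while keeping $\Lambda|_{\fa}=0$. Once it is established that $\nu_j=0$ for all $j$, the rest is essentially a normalization and reindexing argument; the genuine content is the rigidity forced by the condition \eqref{eq-HP2} together with Vogan's classification.
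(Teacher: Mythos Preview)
The paper does not give a proof of this theorem at all: it is stated as a citation of Vogan \cite{V3}, and the only argument offered is the single sentence preceding the statement, ``This part of the unitary dual does not contain any Stein complementary series, and it can be cut out as follows.'' Your proposal is precisely an elaboration of that sentence---take Vogan's full classification of $\widehat{GL(n,\bR)}$, observe that the half-integrality forced by \eqref{eq-HP} kills every continuous parameter (Stein complementary series and the complementary-series deformations of Speh modules), and regroup what survives---so in spirit it coincides exactly with what the paper indicates.

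Two small corrections. First, the final ``converse'' paragraph is both unnecessary and slightly off: the theorem only asserts one implication, and not every module of the form \eqref{eq-unitary2} satisfies \eqref{eq-HP2} (the condition $\Lambda|_{\fa}=0$ and the distinctness implicit in \eqref{eq-HP} impose genuine constraints on the $k_i,m_i$ and on $\pi_u$, as the paper exploits in Sections~\ref{sec-unipotent}--\ref{sec-spehs}). Second, your description of Vogan's building blocks is a little loose: in \cite{V3} the blocks are one-dimensional unitary characters of $GL(d,\bR)$, Stein complementary series, and Speh representations together with their complementary-series deformations; the special unipotent $\pi_u$ in \eqref{eq-unip} arises by inducing several character blocks in stages, which is the ``amalgamation'' you correctly perform at the end.
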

For this reason, Speh representations and special unipotent representations
are often called the `building blocks' of the unitary spectrum in the literature.

\section{Dirac Cohomology of Special Unipotent Representations}\label{sec-unipotent}

In this section, we focus on Dirac cohomology of unipotent representations $\pi_u$
given by \eqref{eq-unip}. We divide the discussion into the cases when $n$ is odd and $n$ is even.

\subsection{Case I: $n$ is odd}

In this section, we focus on the case of unipotent representations of $GL(n,\mathbb{R})$
when $n$ is odd. Indeed, a simple application on \eqref{eq-HP} implies the following.

\begin{proposition} \label{prop-unip}
Consider $GL(n,\mathbb{R})$ with $n$ odd. Suppose $\pi_u$ is a unipotent $(\mathfrak{g},K)$-module
with non-zero Dirac cohomology, then it must be equal to $\mathrm{triv}\ \text{or}\ \mathrm{sgn}(\det)$, or:
\begin{equation} \label{eq-unipodd}
\pi_{n,b}^{\epsilon_1,\epsilon_2} := {\rm Ind}_{GL(n-2b+1) \times GL(2b-1)}^{GL(n)}(\mathrm{sgn}(\det)^{\epsilon_1} \otimes\mathrm{sgn}(\det)^{\epsilon_2}), \quad b < \frac{n}{2}
\end{equation}
where $\epsilon_i \in \{0,1\}$ for $i = 1,2$.
\end{proposition}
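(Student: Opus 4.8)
The plan is to start from the explicit form of a general special unipotent representation in \eqref{eq-unip}, compute its infinitesimal character, and impose the constraint \eqref{eq-HP} coming from Corollary \ref{cor-HP}. Write $\pi_u = {\rm Ind}_{\prod_i GL(r_i,\bR)}^{GL(n,\bR)}(\bigotimes_i \varphi_i)$ with $r_1\geq r_2\geq\dots\geq r_s\geq 1$ and $\sum_i r_i = n$. Its infinitesimal character is the concatenation $\Lambda_u = (\chi_1;\dots;\chi_s)$, where $\chi_i = (r_i-1, r_i-3,\dots)$ terminates at $0$ if $r_i$ is odd and at $1$ if $r_i$ is even. Since $n$ is odd, after reordering $\Lambda_u$ into a dominant sequence in $\fh^*$ and restricting to $\ft^*$ via \eqref{eq-restriction}, the requirement $\Lambda|_{\fa}=0$ forces $\lambda_i = -\lambda_{n+1-i}$ for $1\leq i\leq m$ together with $\lambda_{m+1}=0$. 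The first step is to translate this symmetry condition into a combinatorial condition on the multiset of coordinates of $\Lambda_u$: it must be stable under negation, with $0$ occurring an odd number of times.

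Next I would analyze which block configurations $(r_1,\dots,r_s)$ yield a multiset $\{\chi_1\}\cup\dots\cup\{\chi_s\}$ (taken with multiplicity, after allowing the sign character to flip nothing at the level of infinitesimal character) that is symmetric under $x\mapsto -x$. Each $\chi_i$ is an arithmetic progression of step $2$, nonnegative, hitting $0$ iff $r_i$ is odd. For the union to be $\pm$-symmetric, note that a block with $r_i$ odd contributes the symmetric set $\{0,\pm 2,\dots,\pm(r_i-1)\}$ and is self-balancing, whereas a block with $r_i$ even contributes $\{1,3,\dots,r_i-1\}$, which has no $\pm$-symmetry on its own and must be paired with another even block of the \emph{same} size to restore symmetry. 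Here is the key point: since $n$ is odd, the number of even blocks is odd, so they cannot pair up perfectly — at least one even block is unpaired — unless there are \emph{no} even blocks at all, or unless the unpaired even block's coordinates happen to coincide (as a set, with the right multiplicities) with coordinates already supplied by the odd blocks. Working out this last possibility is the crux: one finds that a single even block of size $2b$ can be "absorbed" only into a single odd block of size $\geq 2b$, and matching the arithmetic progressions pins down the configuration. The upshot should be that either all $r_i$ are odd and the only way to also satisfy the regularity/distinctness in \eqref{eq-HP} (the $\lambda_i$ must be strictly decreasing, equivalently $\Lambda_u$ has no repeated nonzero coordinates except as forced) is $s=1$ — giving $\mathrm{triv}$ or $\mathrm{sgn}(\det)$ — or there are exactly two blocks, one odd of size $n-2b+1$ and one even of size $2b-1$... wait, $2b-1$ is odd; more precisely two odd blocks $r_1 = n-(2b-1)$, $r_2 = 2b-1$ with $\chi_1=\{0,\pm2,\dots\}$ and $\chi_2=\{0,2,\dots,2b-2\}$ nesting inside $\chi_1$, which forces the multiset to be symmetric. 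This recovers exactly \eqref{eq-unipodd}, with the two sign characters $\mathrm{sgn}(\det)^{\epsilon_1}\otimes\mathrm{sgn}(\det)^{\epsilon_2}$ accounting for all four choices that leave the infinitesimal character (hence the constraint) unchanged.

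Finally I would check that these candidates genuinely satisfy Corollary \ref{cor-HP}, not merely the necessary condition $\Lambda|_{\fa}=0$: one must verify there is a $K$-type $\tau$ with $\{\tau-\sigma\} = \Lambda|_{\ft}-\rho_c$. For $\mathrm{triv}$ and $\mathrm{sgn}(\det)$ this is immediate since $\Lambda|_{\ft}=\rho-\rho_c$ restricted appropriately and the relevant $K$-type is the (sign-twisted) trivial one, so $\{\tau-\sigma\}$ lands correctly. For $\pi_{n,b}^{\epsilon_1,\epsilon_2}$ one invokes the known $K$-type data of induced-from-sign representations (the bottom layer $K$-types of the unipotent), matching parities as in the remark after \eqref{K-Speh}. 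The main obstacle I anticipate is the middle step: carefully enumerating the $\pm$-symmetric multiset configurations while simultaneously enforcing the strict-decrease condition on the $\lambda_i$ in \eqref{eq-HP}, since a priori nonsymmetric-looking block collections could conspire to give a symmetric union, and one has to rule out (or correctly incorporate) the "absorption" phenomenon for the lone unpaired even block. Everything else is bookkeeping with arithmetic progressions.
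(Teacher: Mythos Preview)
Your approach has a genuine gap rooted in a misidentification of the operative constraint. The $\pm$-symmetry condition you spend most of the argument on is \emph{automatic}: in $\fh^*$, each block $GL(r_i,\bR)$ contributes the full half-sum $\big(\tfrac{r_i-1}{2},\tfrac{r_i-3}{2},\dots,-\tfrac{r_i-1}{2}\big)$, which is already invariant under $x\mapsto -x$, regardless of whether $r_i$ is even or odd. So the claim that an even block ``has no $\pm$-symmetry on its own and must be paired with another even block of the same size'' is false, and the subsequent parity-of-number-of-even-blocks argument collapses. You also tangle yourself on the block parities near the end: for $n$ odd, $n-2b+1$ is \emph{even} and $2b-1$ is \emph{odd}, so the two-block case is one even block and one odd block, not ``two odd blocks.''

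The paper's argument is a one-line application of \eqref{eq-HP}: the actual constraint is that the coordinates of $\Lambda_u=(\chi_1;\dots;\chi_s)$ in $\ft^*$ be \emph{distinct} positive integers. Now every even $r_i$ gives $\chi_i=\{r_i-1,r_i-3,\dots,1\}$, so any two even blocks share the coordinate $1$; every odd $r_i\geq 3$ gives $\chi_i\ni 2$, and contributes a $0$ to the $\fh^*$ multiset, so two odd blocks force either a repeated positive coordinate or a repeated zero (violating $\lambda_{m+1}=0$ with $\lambda_m>0$). Hence at most one block of each parity, so $s\leq 2$; and since $n$ is odd, the only possibilities are $s=1$ with $r_1=n$ (giving $\mathrm{triv}$ or $\mathrm{sgn}(\det)$) or $s=2$ with one even and one odd block, which is exactly \eqref{eq-unipodd}. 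Finally, your ``Finally'' paragraph is unnecessary here: the proposition is only the necessary direction, and the paper handles the converse separately in Theorem~\ref{thm-oddunip}.
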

It is obvious that $\mathrm{triv}$ and $\mathrm{sgn}(\det)$ have nonzero Dirac cohomology. Indeed, it turns out that the
Dirac cohomology of all $\pi_{n,b}^{\epsilon_1,\epsilon_2}$ are also nonzero.

\begin{theorem} \label{thm-oddunip}
All $\pi_{n,b}^{\epsilon_1,\epsilon_2}$ have nonzero Dirac cohomology.
\end{theorem}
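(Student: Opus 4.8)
The plan is to verify the criterion of Corollary \ref{cor-HP} directly for each $\pi = \pi_{n,b}^{\epsilon_1,\epsilon_2}$. The infinitesimal character of $\pi$ is the concatenation $\Lambda_u = (\chi_1;\chi_2)$, where $\chi_1$ is the $\rho$-type string attached to $GL(n-2b+1,\bR)$ and $\chi_2$ the one attached to $GL(2b-1,\bR)$; since $n$ is odd, $n-2b+1$ is even and $2b-1$ is odd, so $\chi_1 = (n-2b, n-2b-2, \dots, 2, 0)$ (wait --- it has $(n-2b+1)/2$ entries but $n-2b+1$ is even, so actually $\chi_1$ has length $(n-2b+1)/2$ and runs over odd values $n-2b, n-2b-2,\dots,1$), while $\chi_2 = (2b-2, 2b-4, \dots, 2, 0)$. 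First I would sort the multiset of coordinates of $\Lambda_u$ into the strictly decreasing form \eqref{eq-HP}, i.e. write $\Lambda|_\ft = (2\lambda_1,\dots,2\lambda_m)$ with $2\lambda_1 > \dots > 2\lambda_m > 0$ and $m = (n-1)/2$; this is automatic once one observes the two strings interleave without repetition (here the hypothesis $b < n/2$, together with the parity difference between the two blocks, is what guarantees distinctness). So the target weight $\Lambda|_\ft - \rho_c$ is completely explicit.

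The second and main step is to exhibit a $K$-type $E_{(\tau;\eta)}$ occurring in $\pi$ with $\{\tau - \sigma\} = \Lambda|_\ft - \rho_c$. For this I would use the explicit $K$-type content of the induced module: by Frobenius reciprocity for $O(n)$, the $O(n)$-types of $\pi_{n,b}^{\epsilon_1,\epsilon_2}$ are obtained by inducing the $\varphi_1^{\epsilon_1}\boxtimes\varphi_2^{\epsilon_2}$-twisted trivial/sign characters from $O(n-2b+1)\times O(2b-1)$, and the lowest such $K$-type is a single exterior-power type $\wedge^p(\bC^n)$ (tensored by a power of $\det$), whose highest weight $\tau$ is of the form $(1,\dots,1,0,\dots,0)$ with the number of $1$'s, and the value of $\eta$, dictated by $b$ and the $\epsilon_i$. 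Since $n$ is odd, $\sigma = (m+\tfrac12, m-\tfrac12,\dots,\tfrac32)$ and $\rho_c = (m-\tfrac12,\dots,\tfrac12)$, and one checks $\{\tau-\sigma\}$ is obtained by sorting $|\,\tau_i - \sigma_i\,|$ into decreasing order. I would then compute $\{\tau - \sigma\}$ for this candidate $\tau$ and match it coordinate-by-coordinate against $2\lambda_i - \rho_c$; equivalently, I would show the multiset $\{\,2\lambda_i\,\} = \{\,|\tau_i - \sigma_i| + \rho_{c,i}\,\}$ wait --- more cleanly, $\{\tau-\sigma\}+\rho_c$ should equal $\Lambda|_\ft$ as multisets of coordinates. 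The bookkeeping splits into the four cases $(\epsilon_1,\epsilon_2)\in\{0,1\}^2$, but in each case it reduces to a short computation with arithmetic progressions, and the first part of \eqref{eq-HP2}, namely $\Lambda|_\fa = 0$, holds automatically because $\Lambda_u$ is already $\theta$-symmetric (every coordinate is paired with its negative, the lone odd-block $0$ sitting in the $\fa$-slot $\zeta$).

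The step I expect to be the real obstacle is identifying the correct lowest $K$-type $\tau$ of $\pi_{n,b}^{\epsilon_1,\epsilon_2}$ --- that is, pinning down both the number of $1$'s in $\tau$ and the value $\eta\in\{0,1\}$ (and checking $\tau$ genuinely occurs, not merely in the induced module's full restriction but with the right behaviour under the disconnectedness of $O(n)$). The subtlety is that $\wedge^p(\bC^n)$ and $\wedge^{n-p}(\bC^n)$ are related by $\otimes\det$ and only one of them carries the sign character matching $(\epsilon_1,\epsilon_2)$; getting this parity right is exactly what makes $\{\tau-\sigma\}$ land on the dominant chamber rather than off by a Weyl reflection. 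Once $\tau$ is correctly pinned down, the remaining verification that $\|\{\tau-\sigma\}+\rho_c\|$ is minimal among all $K$-types of $\pi$ --- so that it is genuinely a spin-lowest $K$-type and Dirac cohomology is nonzero rather than just possibly nonzero --- follows from Parthasarathy's inequality together with the fact that all other $K$-types of the induced module have strictly more $1$'s/boxes, hence strictly larger norm after subtracting $\sigma$; I would record this as the final line of the argument.
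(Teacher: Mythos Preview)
Your overall strategy---verify Corollary~\ref{cor-HP} by exhibiting a $K$-type $E_{(\tau;\eta)}$ with $\{\tau-\sigma\}=\Lambda-\rho_c$---is exactly what the paper does. The gap is in your choice of $\tau$: you propose to take the \emph{lowest} $K$-type of $\pi_{n,b}^{\epsilon_1,\epsilon_2}$, which you identify (essentially correctly) as an exterior-power type $(1,\dots,1,0,\dots,0)$ or $(0,\dots,0)$. This $K$-type does \emph{not} satisfy $\{\tau-\sigma\}=\Lambda-\rho_c$ in general. For a concrete failure, take $n=5$ and the unipotent with $\Lambda=(3,1)$: then $\rho_c=(\tfrac32,\tfrac12)$, $\sigma=(\tfrac52,\tfrac32)$, and $\Lambda-\rho_c=(\tfrac32,\tfrac12)$; but for the lowest $K$-type $\tau=(0,0)$ one gets $\{\tau-\sigma\}=(\tfrac52,\tfrac32)\neq(\tfrac32,\tfrac12)$. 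The $K$-type that actually works here is $\tau=(2,0)$, which is not the lowest $K$-type.

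The paper's proof identifies the correct spin-lowest $K$-type: writing $q=\min\{n-2b+1,2b-1\}$, the full $K$-spectrum consists of $E_{(c_1,\dots,c_q,0,\dots,0;\epsilon')}$ with $c_1\ge\dots\ge c_q\ge 0$ all of parity $\epsilon_1+\epsilon_2$, and one takes $c_j$ to be whichever of $\tfrac{n-1}{2}-(j-1)$ or $\tfrac{n+1}{2}-(j-1)$ has the required parity. This yields $\tau\approx(m,m-1,\dots,m-q+1,0,\dots,0)$, for which each $|\tau_j-\sigma_j|=\tfrac12$ on the first $q$ coordinates, so $\{\tau-\sigma\}=(\tfrac{p-q}{2},\dots,\tfrac32,\tfrac12,\dots,\tfrac12)=\Lambda-\rho_c$. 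Your final heuristic (``more boxes $\Rightarrow$ larger spin-norm'') is also backwards: the spin-lowest $K$-type has \emph{more} boxes than the lowest $K$-type but \emph{smaller} $\|\{\tau-\sigma\}+\rho_c\|$, precisely because adding boxes can bring $\tau$ closer to $\sigma$.
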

\begin{proof}
Let $p = \max\{n-2b+1, 2b-1\}$ and $q = \min\{n-2b-1,2b+1\}$. By Section 7 of \cite{V3}, the $K$-types of $\pi_{n,b}^{\epsilon_1,\epsilon_2}$ are of the form
$$\pi_{n,b}^{\epsilon_1,\epsilon_2}|_K \cong \begin{cases}
\displaystyle \bigoplus_{a_1 \geq \dots \geq a_q \geq 0} E_{(2a_1, \dots, 2a_q, 0,\dots,0; \epsilon')} &\text{if}\ \epsilon_1 + \epsilon_2 \equiv 0 (\mathrm{mod}\ 2) \\
\displaystyle \bigoplus_{a_1 \geq \dots \geq a_q \geq 0} E_{(2a_1+1, \dots, 2a_q+1, 0,\dots,0; \epsilon')} &\text{if}\ \epsilon_1 + \epsilon_2 \equiv 1 (\mathrm{mod}\ 2)
\end{cases},$$
where $\epsilon' = \epsilon_1$ if $q$ is odd, and $\epsilon' = \epsilon_2$ if $q$ is even.

The infinitesimal character of $\pi_{n,b}^{\epsilon_1,\epsilon_2}$ is
\begin{equation} \label{eq-oddinfl}
\Lambda = (\overbrace{p-1, p-3, \dots,q+2}^{(p-q-1)/2} ,\overbrace{q, q-1, \dots, 1}^{q}) \in \ft^*.
\end{equation}
Therefore,
$$\Lambda - \rho_c = (\overbrace{\frac{p-q}{2},\frac{p-q}{2}-1,\dots,\frac{3}{2}}^{(p-q-1)/2},\frac{1}{2},\dots,\frac{1}{2}).$$
On the other hand, the PRV component of any $K$-type in $\pi_{n,b}^{\epsilon_1,\epsilon_2}$ tensored with $\mathrm{Spin}$ has highest weight of the form
\begin{equation} \label{eq-oddprv}
\{\sigma- (2a_1,\dots,2a_q, 0,\dots, 0) \} = \{ (\frac{n}{2} - 2a_1, \dots, \frac{n}{2} - (q-1) - 2a_q, \overbrace{\frac{p-q}{2},\dots, \frac{3}{2}}^{(p-q-1)/2}) \}
\end{equation}
when $\epsilon_1 + \epsilon_2$ is even (or replace $2a_i$ by $2a_i +1$ above if $\epsilon_1 + \epsilon_2$ is odd).

For $1\leq j\leq q$, let $\alpha_j=\frac{n-1}{2} - (j-1)$ and $\beta_j=\alpha_j+1$.
Then $\alpha_j$, $\beta_j$ are of different parity with
$$
\left|\frac{n}{2} - (j-1)  - \alpha_j\right| =\left|\frac{n}{2} - (j-1) - \beta_j\right| = \frac{1}{2}.
$$
By letting $\gamma_j, \delta_j \in \{\alpha_j, \beta_j\}$ such that $\gamma_j$ is odd and $\delta_j$ is even,
one can pick the $K$-types
\begin{equation} \label{eq-spinlktunipodd}
E_{(\gamma_1, \dots, \gamma_q, 0, \dots, 0;\epsilon')}, \quad \text{or}\quad
E_{(\delta_1, \dots, \delta_q, 0,\dots,0;\epsilon')}
\end{equation}
in $\pi_{n,b}^{\epsilon_1,\epsilon_2}|_K$ (depending on the parity of $\epsilon_1 + \epsilon_2$) such that the PRV component of this $K$-type
tensored with $\mathrm{Spin}$ given in \eqref{eq-oddprv} has highest weight
$$\{(\pm \frac{1}{2}, \dots, \pm \frac{1}{2}, \frac{p-q}{2},\dots, \frac{3}{2})\} =  (\frac{p-q}{2}, \dots, \frac{3}{2}, \frac{1}{2}, \dots, \frac{1}{2}),$$
which equals $\Lambda - \rho_c$, i.e., \eqref{eq-HP2} is satisfied. Moreover, this is the unique $K$-type in $\pi_{n,b}^{\epsilon_1,\epsilon_2}$ satisfying this property. Hence
each $\pi_{n,b}^{\epsilon_1,\epsilon_2}$ has nonzero Dirac cohomology with spin-lowest $K$-type given in \eqref{eq-spinlktunipodd}. The result follows.
\end{proof}

\subsection{Case II: $n$ is even}
As in the case when $n$ is odd, we first describe all unipotent representations whose infinitesimal characters satisfy \eqref{eq-HP} when
$n$ is even.

\begin{proposition}
Consider $GL(n,\mathbb{R})$ with $n$ even. Suppose $\pi_u$ is a unipotent representation with nonzero Dirac cohomology,
then it must be equal to $\mathrm{triv}$ or $\mathrm{sgn}(\det)$, or of the form:
\begin{equation} \label{eq-unipeven2}
\pi_{n, min}^{\epsilon_1,\epsilon_2} := {\rm Ind}_{GL(n-1) \times GL(1)}^{GL(n)}(\mathrm{sgn}(\det)^{\epsilon_1} \otimes\mathrm{sgn}(\det)^{\epsilon_2}),
\end{equation}
or
\begin{equation} \label{eq-unipeven3}
\pi_{n, b}^{\epsilon_1,\epsilon_2,\epsilon_3} := {\rm Ind}_{GL(n-2b) \times GL(2b-1) \times GL(1)}^{GL(n)}(\mathrm{sgn}(\det)^{\epsilon_1} \otimes\mathrm{sgn}(\det)^{\epsilon_2} \otimes \mathrm{sgn}(\det)^{\epsilon_3}), \quad b < \frac{n}{2}.
\end{equation}
\end{proposition}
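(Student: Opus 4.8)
The plan is to mimic the odd case: classify the special unipotent representations $\pi_u$ of $GL(n,\bR)$ (with $n$ even) whose infinitesimal character, written in the form \eqref{eq-HP}, is a strictly decreasing sequence of non-negative integers $(2\lambda_1, \dots, 2\lambda_m)$, as forced by Corollary \ref{cor-HP}. Recall from \eqref{eq-unip} that $\pi_u$ is an induced representation from $\prod_i GL(r_i,\bR)$ with $r_1 \geq \dots \geq r_s \geq 1$ and $\sum r_i = n$, and its infinitesimal character is the concatenation of the "staircases" $\chi_i = (r_i-1, r_i-3, \dots, \epsilon)$. First I would observe that because $n = 2m$ is even, the number of odd $r_i$'s must be even, and I would rewrite the $\ft^*$-infinitesimal character $\Lambda$ via \eqref{eq-restriction}: the entries of $\Lambda|_\ft$ are sums $\lambda_j + (\text{partner entry})$ after reordering the full $GL(n,\bR)$ infinitesimal character $(\lambda_1,\dots,\lambda_n)$ decreasingly, and the condition $\Lambda|_\fa = 0$ pins down which pairs can occur.

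Next I would carry out the combinatorial bookkeeping. Each staircase $\chi_i$ of length $r_i$ contributes the multiset $\{r_i - 1, r_i - 3, \dots\}$ to the $GL(n,\bR)$ infinitesimal character (as an unordered set of exponents). The requirement that $\Lambda|_\fa = 0$, i.e. that the sorted infinitesimal character of $GL(n,\bR)$ is symmetric about $0$ after the implicit identification, together with the requirement that $\Lambda|_\ft = (2\lambda_1 > \dots > 2\lambda_m)$ has strictly decreasing, non-negative, \emph{integer} entries, severely restricts the allowed partitions $(r_1, \dots, r_s)$. The key point is that the "folded" staircase $2 \cdot \chi_i$ or the interleaving of two staircases must produce distinct values; I expect this to force at most three blocks, with all but (at most) two of them of size $1$, giving precisely the shapes $GL(n-1) \times GL(1)$ (this is \eqref{eq-unipeven2}), $GL(n-2b) \times GL(2b-1) \times GL(1)$ (this is \eqref{eq-unipeven3}), and the degenerate cases $GL(n)$ itself, which yield $\mathrm{triv}$ and $\mathrm{sgn}(\det)$. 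I would handle the parities of the $\epsilon_i$'s by noting that $\mathrm{triv}$ vs. $\mathrm{sgn}(\det)$ on each factor only shifts the corresponding Vogan-diagram data, not the infinitesimal character, so all sign choices survive to this stage.

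The main obstacle will be the case analysis showing that no partition with two blocks of size $>1$ other than the pattern $(n-2b, 2b-1, 1)$ can have an infinitesimal character meeting \eqref{eq-HP}: one must rule out, for instance, two even-sized blocks, or an even block together with two odd blocks of size $>1$, by checking that the multiset of exponents cannot be folded symmetrically into a strictly decreasing integer sequence without a repetition. Concretely, I would argue that if two blocks both have size $\geq 2$, then their staircases overlap in small values (e.g. both contain the entry $0$ or both contain $1$, or a $0$ and a $2$, etc.), and after folding via \eqref{eq-restriction} this produces either a repeated coordinate in $\Lambda|_\ft$ or a violation of $\Lambda|_\fa = 0$ — unless the two blocks are arranged as the complementary pair $(n-2b)$ and $(2b-1)$ padded by a single $GL(1)$, which is exactly what makes the symmetric folding work. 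Once the list of shapes is pinned down, the proposition follows immediately by reading off \eqref{eq-unipeven2} and \eqref{eq-unipeven3}; the subsequent verification that these all \emph{do} have non-zero Dirac cohomology (the analogue of Theorem \ref{thm-oddunip}) would be a separate statement, handled exactly as in the odd case by exhibiting the explicit spin-lowest $K$-type from the $K$-type formula of \cite{V3} Section 7.
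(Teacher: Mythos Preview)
Your approach is correct and is exactly what the paper has in mind: the proposition is stated without proof, as an immediate application of the constraint \eqref{eq-HP} on the infinitesimal character (just as Proposition~\ref{prop-unip} in the odd case is introduced by ``a simple application on \eqref{eq-HP} implies the following''). Your combinatorial case analysis on the partition $(r_1,\dots,r_s)$ is the natural way to flesh this out.

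Two brief remarks. First, you can streamline considerably: since each $\rho_{GL(r_i)}$ is already symmetric about $0$, the condition $w\Lambda|_{\fa}=0$ is automatic for every unipotent $\pi_u$, so no ``folding'' via \eqref{eq-restriction} is needed---the only real constraint is that the concatenated staircases $(\chi_1;\dots;\chi_s)\in\ft^*$ have pairwise distinct entries. Then the case split is immediate: two even blocks each contribute the entry $1$; two odd blocks of size $\geq 3$ each contribute the entry $2$; more than two odd blocks produce a repeated $0$; and this forces precisely the shapes $(n)$, $(n-1,1)$, or $(n-2b,\,2b-1,\,1)$. Second, your closing parenthetical is not accurate for even $n$: unlike Theorem~\ref{thm-oddunip}, \emph{not} every representation in \eqref{eq-unipeven2}--\eqref{eq-unipeven3} has nonzero Dirac cohomology---Theorems~\ref{thm-unipeven2} and \ref{thm-unipeven3} show that only those of the correct parity do. This does not affect the proposition (which is only the ``only if'' direction), but the analogy with the odd case breaks down at that later step.
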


Contrary to the case when $n$ is odd, not all $\pi_{n, min}^{\epsilon_1,\epsilon_2}$
and $\pi_{n, b}^{\epsilon_1,\epsilon_2,\epsilon_3}$ have nonzero Dirac cohomology. We first study the case of
$\pi_{n, min}^{\epsilon_1,\epsilon_2}$.

\begin{theorem} \label{thm-unipeven2}
Consider $GL(n,\mathbb{R})$ with $n=2m$ being even.
Then the unipotent representations $\pi_{n, min}^{\epsilon_1,\epsilon_2}$ in \eqref{eq-unipeven2} have nonzero Dirac cohomology
if and only if $\epsilon_1 + \epsilon_2 \equiv m (\mathrm{mod}\ 2)$.
\end{theorem}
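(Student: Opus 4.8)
The plan is to mimic the argument of Theorem \ref{thm-oddunip}, using the explicit $K$-type formula and infinitesimal character of $\pi_{n,min}^{\epsilon_1,\epsilon_2}$, and to track the \emph{parity} obstruction that was invisible in the odd case but becomes decisive when $n=2m$. First I would record the basic data: $\pi_{n,min}^{\epsilon_1,\epsilon_2}={\rm Ind}_{GL(n-1)\times GL(1)}^{GL(n)}(\mathrm{sgn}(\det)^{\epsilon_1}\otimes\mathrm{sgn}(\det)^{\epsilon_2})$ has, by Section 7 of \cite{V3}, $K$-types $E_{(a_1,\dots,a_{m-1},0;\,\eta)}$ (with a fixed parity on each $a_i$ determined by $\epsilon_1+\epsilon_2$, and $\eta$ determined by the parities $\epsilon_i$), and infinitesimal character $\Lambda=(m-1,m-2,\dots,1,0)\in\ft^*$, so that $\Lambda-\rho_c=(0,0,\dots,0)$. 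The spinor highest weight is $\sigma=(m,m-1,\dots,\pm 1)$ from \eqref{eq-sigma}. Thus Corollary \ref{cor-HP} requires a $K$-type with highest weight $\tau$ such that $\{\tau-\sigma\}=(0,\dots,0)$, i.e.\ $\tau$ is a $W(\frk,\ft)$-conjugate of $\sigma=(m,m-1,\dots,1)$ (up to the sign ambiguity in the last coordinate, which is absorbed by $W(\frk,\ft)$ when $n=2m$).

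Next I would identify which candidate $K$-type $E_{(m,m-1,\dots,1;\,\eta)}$ actually appears in $\pi_{n,min}^{\epsilon_1,\epsilon_2}$. Since the $K$-types of $\pi_{n,min}^{\epsilon_1,\epsilon_2}$ all have last coordinate $\gamma_m=0$, but $\sigma$ has all coordinates nonzero, at first sight no $K$-type can match $\sigma$. The resolution is that $\{\tau-\sigma\}=0$ only forces $\tau$ to be $W(\frk,\ft)$-conjugate to $\sigma$, and the $W(\frk,\ft)$-orbit of $(m,m-1,\dots,1)$ for $D_m$ (when $\epsilon=0$) consists of all sign-change-and-permutation images; the $\Delta^+(\frk,\ft)$-dominant representative is $(m,m-1,\dots,1)$ itself, which has no zero coordinate. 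Hence one must instead check whether there is a $K$-type $\tau$ of $\pi_{n,min}^{\epsilon_1,\epsilon_2}$ whose PRV component with $\mathrm{Spin}$ hits $\Lambda-\rho_c=0$; writing $\tau=(\gamma_1,\dots,\gamma_{m-1},0)$, one needs $\{\tau-\sigma\}=0$, and comparing coordinates shows this is impossible unless $\gamma_i$ takes the value that makes $(\gamma_1,\dots,\gamma_{m-1},0)$ a permutation-with-signs of $(m,m-1,\dots,1)$ — which is never exactly achievable. \emph{The correct statement must be that the minimal value of $\|\{\tau-\sigma\}+\rho_c\|$ over the $K$-types is attained, and equals $\|\Lambda\|$, precisely under the stated parity condition.} So the real computation is: over all $\tau=(\gamma_1,\dots,\gamma_{m-1},0)$ with the parity forced by $\epsilon_1+\epsilon_2$, minimise $\|\{\tau-\sigma\}+\rho_c\|=\|\{\tau-\sigma\}+(m-1,\dots,1,0)\|$, and show the minimum equals $\|\Lambda\|=\|(m-1,\dots,1,0)\|$ iff $\epsilon_1+\epsilon_2\equiv m\pmod 2$, with strict inequality otherwise. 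Concretely, taking $\tau=(m,m-1,\dots,2,0)$ (dropping the $1$ from $\sigma$ and inserting $0$) gives $\tau-\sigma=(0,\dots,0,-1,\dots)$ up to reordering, and $\{\tau-\sigma\}+\rho_c$ can be computed directly; the parity of the coordinate sum of $\tau$ is $m+(m-1)+\dots+2 = \binom{m+1}{2}-1$, which must match the parity forced by $\epsilon_1+\epsilon_2$, and this is exactly where the congruence $\epsilon_1+\epsilon_2\equiv m\pmod 2$ enters.

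I would organise the proof in two directions. For the \textbf{if} direction ($\epsilon_1+\epsilon_2\equiv m$): exhibit the explicit $K$-type $\tau^\ast$ with the forced parity that is a sign-and-permutation image of $\sigma$ with one coordinate replaced by $0$ — more precisely, $\tau^\ast=(m,m-1,\dots,\widehat{1},\dots)$ reordered to be dominant with last entry $0$ — verify it occurs in $\pi_{n,min}^{\epsilon_1,\epsilon_2}|_K$ using the $K$-type formula from \cite{V3}, compute $\{\tau^\ast-\sigma\}$, and check $\{\tau^\ast-\sigma\}+\rho_c=\Lambda-\rho_c+\rho_c=\Lambda$ has norm $\|\Lambda\|$; then invoke Parthasarathy's Dirac inequality (as packaged in Corollary \ref{cor-HP}) to conclude $H_D(\pi_{n,min}^{\epsilon_1,\epsilon_2})\neq 0$. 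For the \textbf{only if} direction ($\epsilon_1+\epsilon_2\not\equiv m$): show that for every $K$-type $\tau$ of $\pi_{n,min}^{\epsilon_1,\epsilon_2}$ one has $\|\{\tau-\sigma\}+\rho_c\|>\|\Lambda\|$, using that the parity of $\tau$ is wrong so $\tau$ can never be a sign-permutation of $\sigma$ with a zero inserted; a short case analysis on how far the multiset of $|\gamma_i|$'s can be from $\{m,m-1,\dots,1\}$ under the parity constraint gives the strict gap, and then Parthasarathy's inequality (which is sharp here, since equality characterises nonzero Dirac cohomology by Corollary \ref{cor-HP}) forces $H_D=0$.

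The main obstacle I expect is the \textbf{only if} direction: showing the \emph{strict} norm inequality for \emph{all} infinitely many $K$-types simultaneously. The $K$-types grow without bound, so one cannot check finitely many; instead one argues that $\|\{\tau-\sigma\}+\rho_c\|\ge\|\Lambda\|$ always (this is Parthasarathy's inequality applied to the unitary module $\pi_{n,min}^{\epsilon_1,\epsilon_2}$ with the given infinitesimal character), so the only issue is ruling out equality, and equality would force $\{\tau-\sigma\}$ to be a specific vector of a fixed parity; the parity of $\{\tau-\sigma\}$ is rigidly linked to the parity of $\tau$, hence to $\epsilon_1+\epsilon_2$, and to $m$ via $\sigma$. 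Making this parity bookkeeping precise — especially handling the $\eta$-label and the sign ambiguity in the last coordinate of $\sigma$ correctly for $n=2m$ — is the delicate part, but it is a finite combinatorial check once set up, and it is the same mechanism that was silently automatic in the odd case of Theorem \ref{thm-oddunip}.
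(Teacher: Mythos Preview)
Your proposal rests on two incorrect pieces of basic data, and once those are fixed the proof collapses to something much simpler than what you sketch.

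\textbf{Wrong $K$-type structure.} You assert that $\pi_{n,\min}^{\epsilon_1,\epsilon_2}$ has $K$-types of the form $E_{(a_1,\dots,a_{m-1},0;\eta)}$, i.e.\ with $m-1$ free coordinates. This is not so: here $q=\min\{n-1,1\}=1$, and exactly as in the proof of Theorem~\ref{thm-oddunip} (see also Section~7 of \cite{V3}) the $K$-spectrum is
\[
\pi_{n,\min}^{\epsilon_1,\epsilon_2}\big|_K \;=\; \bigoplus_{a\ge 0} E_{(\alpha,0,\dots,0;\,\epsilon_1)},\qquad \alpha=2a\ \text{or}\ 2a+1\ \text{according as}\ \epsilon_1+\epsilon_2\ \text{is even or odd}.
\]
So each $K$-type has a \emph{single} nonzero coordinate, not $m-1$.

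\textbf{Wrong infinitesimal character.} You write $\Lambda=(m-1,\dots,1,0)\in\ft^*$ and hence $\Lambda-\rho_c=0$. In fact the $\fh^*$-infinitesimal character is $(m-1,m-2,\dots,-(m-1);0)$, and under the restriction \eqref{eq-restriction} one gets $\Lambda|_{\ft}=(2m-2,2m-4,\dots,2,0)=(n-2,\dots,2,0)$, so
\[
\Lambda-\rho_c \;=\; (m-1,m-2,\dots,1,0),
\]
not the zero vector.

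With the correct data, the PRV component of $E_{(\alpha,0,\dots,0;\epsilon_1)}\otimes\mathrm{Spin}$ has highest weight
\[
\{(\alpha-m,\,-(m-1),\dots,-2,-1)\},
\]
and this equals $\Lambda-\rho_c=(m-1,\dots,1,0)$ if and only if $\alpha=m$. There is no norm-minimisation or infinite case analysis needed: equality in \eqref{eq-HP2} forces $\alpha=m$ uniquely, and the only remaining question is whether the $K$-type $E_{(m,0,\dots,0;\epsilon_1)}$ actually occurs, which by the parity constraint on $\alpha$ happens precisely when $m\equiv\epsilon_1+\epsilon_2\pmod 2$. Both directions fall out at once.

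All of your complications --- the attempt to realise $\tau$ as a $W(\frk,\ft)$-conjugate of $\sigma$, the candidate $\tau^\ast=(m,m-1,\dots,2,0)$, the ``strict norm inequality for all infinitely many $K$-types'' --- stem from the two errors above and are unnecessary.
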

\begin{proof}
As in Theorem \ref{thm-oddunip}, the $K$-types of $\pi_{n, min}^{\epsilon_1,\epsilon_2}$ in \eqref{eq-unipeven2} are of the form:
\begin{equation} \label{eq-nminktypes}
\pi_{n, min}^{\epsilon_1,\epsilon_2}|_K \cong \begin{cases}
\displaystyle \bigoplus_{a \geq 0} E_{(2a,0,\dots,0; \epsilon_1)} &\text{if}\ \epsilon_1 + \epsilon_2 \equiv 0 (\mathrm{mod}\ 2)\\
\displaystyle \bigoplus_{a \geq 0} E_{(2a+1,0,\dots,0; \epsilon_1)} &\text{if}\ \epsilon_1 + \epsilon_2 \equiv 1 (\mathrm{mod}\ 2)
\end{cases}.
\end{equation}
The infinitesimal character of $\pi_{n, min}^{\epsilon_1,\epsilon_2}$ is $\Lambda = (n-2,\dots,4, 2, 0) \in \ft^*$ and hence
$$\Lambda - \rho_c = (m-1, \dots, 2, 1,0).$$

Now for each $K$-type showing up in $\pi_{n, min}^{\epsilon_1,\epsilon_2}$ in \eqref{eq-nminktypes}, the PRV component of $E_{(\alpha,0,\dots,0; \epsilon_1)} \otimes \mathrm{Spin}$ has highest weight equal to
$$\{(\alpha - m, -m +1, \dots, -2,-1)\}.$$
In order for \eqref{eq-HP2} to hold, the above expression be equal to $\Lambda - \rho_c$, and hence $\alpha = m$.
Therefore, if $m$ is odd, then $\alpha$ must be odd, and hence by \eqref{eq-nminktypes},
$\pi_{n, min}^{\epsilon_1,\epsilon_2}$ has nonzero Dirac cohomology if and only if $\epsilon_1 + \epsilon_2$ is odd.
Similarly, if $m$ is even, then $\alpha$ is even and hence $\pi_{n, min}^{\epsilon_1,\epsilon_2}$ has nonzero Dirac cohomology if and only if $\epsilon_1 + \epsilon_2$ is even, so the result follows.
\end{proof}

In the proof above, one notices that the parity of the $K$-types of $\pi_u$
plays an important role in determining its Dirac cohomology. More precisely,
we define the parity of a $(\fg,K)$-module by saying $\pi$ is odd/even if
all $K$-types $E_{(\gamma,\eta)}$ appearing in $\pi|_K$ are odd/even (see last
paragraph of Section \ref{sec-ktypes}).
Then we have the following:
\begin{lemma} \label{lemma-unipeven3}
Let $G = GL(n,\mathbb{R})$ where $n$ is even. Then the parity of its unipotent representations are given by:
\begin{itemize}
\item[(a)] $\pi_{n,min}^{\epsilon_1,\epsilon_2}$ has the same parity as $(\epsilon_1 + \epsilon_2)$.
\item[(b)] $\pi_{n, b}^{\epsilon_1,\epsilon_2,\epsilon_3}$ has the same parity as $(\epsilon_2 + \epsilon_3)$.
\end{itemize}
\end{lemma}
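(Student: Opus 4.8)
The plan is to determine the parity of these induced modules from the action of the central element $-I\in GL(n,\bR)$, which avoids computing the $K$-type decomposition of $\pi_{n,b}^{\epsilon_1,\epsilon_2,\epsilon_3}$ directly. The starting point is that when $n=2m$ is even, $-I$ lies in $SO(n)$ and acts on a $K$-type $E_{(\gamma;\eta)}$ by the scalar $(-1)^{\gamma_1+\dots+\gamma_m}$, that is, by $+1$ if $E_{(\gamma;\eta)}$ is even and by $-1$ if it is odd. To see this I would note that, in the standard embedding $T_0=SO(2)^m\subseteq SO(n)$, the element $-I$ is the rotation by $\pi$ in every factor, on which a weight $\mu=\mu_1e_1+\dots+\mu_me_m$ takes the value $(-1)^{\mu_1+\dots+\mu_m}$; since every weight of the $SO(n)$-irreducible constituent(s) of $E_{(\gamma;\eta)}$ differs from $\gamma$---or from $(\gamma_1,\dots,\gamma_{m-1},-\gamma_m)$ when $\eta=\frac{1}{2}$---by a sum of roots, all of which have even coordinate sum, every such weight has the same parity as $\gamma$. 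Since $-I$ is central in $GL(n,\bR)$ it then acts by a single scalar on any finite-length $(\fg,K)$-module; in particular such a module automatically has a well-defined parity in the sense preceding the lemma, being even precisely when $-I$ acts by $+1$.

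Next I would compute this scalar for an induced module $\mathrm{Ind}_P^{GL(n,\bR)}\bigl(\bigotimes_{i}\mathrm{sgn}(\det)^{\epsilon_i}\bigr)$ attached to a parabolic $P$ with Levi $\prod_i GL(n_i,\bR)$. The element $-I$ lies in the center of the Levi, and the normalized modular twist $\delta_P^{1/2}$ is trivial at $-I$ since $|\det(-I_{n_i})|=1$ in each block; hence $-I$ acts by the scalar $\prod_i\mathrm{sgn}\bigl(\det(-I_{n_i})\bigr)^{\epsilon_i}=\prod_i(-1)^{n_i\epsilon_i}=(-1)^{\sum_i n_i\epsilon_i}$. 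Combined with the previous paragraph, the parity of such a module equals $\sum_i n_i\epsilon_i\pmod 2$.

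Finally I would specialize. For $\pi_{n,min}^{\epsilon_1,\epsilon_2}$ the block sizes are $n-1$ and $1$, both odd, so its parity is $(n-1)\epsilon_1+\epsilon_2\equiv\epsilon_1+\epsilon_2\pmod 2$, which is (a); this can also be read off directly from the decomposition \eqref{eq-nminktypes}. For $\pi_{n,b}^{\epsilon_1,\epsilon_2,\epsilon_3}$ the block sizes are $n-2b$ (even), $2b-1$ (odd) and $1$ (odd), so its parity is $(n-2b)\epsilon_1+(2b-1)\epsilon_2+\epsilon_3\equiv\epsilon_2+\epsilon_3\pmod 2$, which is (b). The one step deserving care is the identification, in the first paragraph, of the combinatorial parity of $E_{(\gamma;\eta)}$ with the sign by which $-I$ acts, where the $O(n)$ versus $SO(n)$ conventions fixed earlier must be handled correctly; the rest is a short computation.
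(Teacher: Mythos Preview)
Your argument is correct and takes a genuinely different route from the paper. The paper proves (a) by reading off the explicit $K$-type decomposition \eqref{eq-nminktypes}, and for (b) it performs induction in stages $O(n-2b)\times O(2b-1)\to O(n-1)$ then $O(n-1)\times O(1)\to O(n)$, applies the branching rule to constrain the shape of any $E_{(\mu;\eta)}$ occurring, and then carries out a case analysis on the parities of $\epsilon_1+\epsilon_2$ and of $q=\min\{n-2b,2b-1\}$ to pin down $\sum\mu_i\pmod 2$. Your approach sidesteps all of this by observing that for $n=2m$ the central element $-I$ lies in $T_0$ and acts on every weight $\mu$ by $(-1)^{|\mu|}$, hence on each $K$-type by its parity, while on an induced module it acts by the inducing character evaluated at $-I$; the parity then drops out as $\sum_i n_i\epsilon_i\pmod 2$. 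This is shorter, uniform, and avoids branching entirely. One small remark: your sentence that $-I$ ``acts by a single scalar on any finite-length $(\fg,K)$-module'' is not literally true in general, but it is immaterial here since you compute the scalar directly on the induced space, which forces every $K$-type to have that parity regardless of reducibility. The paper's more laborious route does have one payoff you forgo: along the way it records the shape constraint $\mu_{q+2}=\cdots=\mu_m=0$ on the $K$-types of $\pi_{n,b}^{\epsilon_1,\epsilon_2,\epsilon_3}$, which is reused in the proof of Theorem~\ref{thm-unipeven3} to locate the spin lowest $K$-type.
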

\begin{proof}
(a) follows from \eqref{eq-nminktypes} immediately. For (b), consider
\begin{equation} \label{eq-111ktypes}
\begin{aligned}
\pi_{n,b}^{\epsilon_1,\epsilon_2,\epsilon_3}|_K &\cong {\rm Ind}_{O(n-1) \times O(1)}^{O(n)}({\rm Ind}_{O(n-2b) \times O(2b-1)}^{O(n-1)}({\det}^{\epsilon_1} \otimes {\det}^{\epsilon_2}) \otimes {\det}^{\epsilon_3}) \\
&\cong {\rm Ind}_{O(n-1) \times O(1)}^{O(n)}\left(\bigoplus_{\alpha_1 \geq \dots \geq \alpha_q \geq 0, \, \alpha_j \equiv \epsilon_1+\epsilon_2 (\text{mod}\ 2)} E_{(\alpha_1, \dots, \alpha_q, 0,\dots,0; \epsilon')} \otimes {\det}^{\epsilon_3}\right),
\end{aligned}
\end{equation}
where $q = \min\{n-2b,2b-1\}$, and $\epsilon' = \begin{cases} \epsilon_1 & \text{if}\ q\ \text{is odd};\\
\epsilon_2 & \text{if}\ q\ \text{is even} \end{cases}$.
\medskip

Suppose $E_{(\mu; \eta)}$ is a $K$-type appearing in $\pi_{n,b}^{\epsilon_1,\epsilon_2,\epsilon_3}$. By looking at the $O(n-1)$-factor of the the restricted module $E_{(\mu; \eta)}|_{O(n-1) \times O(1)}$, one must have $\eta = \epsilon'$ and
$$\mu_1 \geq \alpha_1 \geq \mu_2 \geq \dots \geq \mu_q \geq \alpha_q \geq \mu_{q+1} \geq 0 = \mu_{q+2} = \dots = \mu_{m}$$
for some $\alpha_1 \geq \dots \geq \alpha_q \geq 0$. In other words, the last $m - q-1$ coordinates of $\mu$
must be equal to zero.

As for the $O(1)$--component in the restriction, one must have
$$
\sum_{i=1}^{q+1} \mu_i - \sum_{j= 1}^q \alpha_j + \eta \equiv \epsilon_3 \quad (\mathrm{mod}\ 2).
$$
Therefore,
$$
\sum_{i=1}^{q+1} \mu_i \equiv \sum_{j= 1}^q \alpha_j + \epsilon' + \epsilon_3 \quad (\mathrm{mod}\ 2).
$$
We \emph{claim} that $\sum_{j= 1}^q \alpha_j + \epsilon' \equiv \epsilon_2 (\mathrm{mod}\ 2)$, from which (b) follows immediately.
To see so, suppose firstly that $\epsilon_1 + \epsilon_2$ is even. Then
$\sum_{j= 1}^q \alpha_j + \epsilon' \equiv \epsilon' (\mathrm{mod}\ 2)$,
and $\epsilon_1 = \epsilon_2$. Thus $\epsilon' = \epsilon_2$ regardless of the parity of $q$.

On the other hand, if $\epsilon_1 + \epsilon_2$ is odd, then
$$
\sum_{j= 1}^q \alpha_j + \epsilon' \equiv q + \epsilon' \quad (\mathrm{mod}\ 2).
$$
If $q$ is odd, then $\epsilon' = \epsilon_1$ and $q + \epsilon' \equiv 1+ \epsilon_1 \equiv \epsilon_2 \,(\mathrm{mod}\ 2)$.
If $q$ is even, then $\epsilon' = \epsilon_2$ and $q + \epsilon' \equiv \epsilon_2 \,(\mathrm{mod}\ 2)$ as well. Thus the claim holds, and the proof finishes.
\end{proof}

\begin{theorem} \label{thm-unipeven3}
The Dirac cohomology of the unipotent representation
$\pi_{n, b}^{\epsilon_1,\epsilon_2,\epsilon_3}$ in \eqref{eq-unipeven3}
is non-zero if and only if it is of the same parity as $b$, i.e. $\epsilon_2+\epsilon_3\equiv b \, (\mathrm{mod}\ 2)$.
\end{theorem}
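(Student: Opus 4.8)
The plan is to argue as in the proofs of Theorems~\ref{thm-oddunip} and~\ref{thm-unipeven2}: first determine the infinitesimal character $\Lambda$ and the vector $\Lambda|_\ft-\rho_c$, and then use the $K$-type description \eqref{eq-111ktypes} together with Corollary~\ref{cor-HP} to decide when some $K$-type of $\pi_{n,b}^{\epsilon_1,\epsilon_2,\epsilon_3}$ realizes the equality \eqref{eq-HP2}. Throughout, write $n=2m$, $c=m-b$ and $q=\min\{n-2b,2b-1\}$.

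First I would compute $\Lambda$ via \eqref{eq-unip}: concatenating the $\rho$-vectors of $GL(n-2b,\bR)$, $GL(2b-1,\bR)$ and $GL(1,\bR)$ and restricting to $\ft^*$ by \eqref{eq-restriction}, one gets that $\Lambda|_\ft$ is the decreasing rearrangement of $\{1,3,\dots,2c-1\}\cup\{2,4,\dots,2b-2\}$ with a final coordinate $0$ appended (the two zeros among these $\rho$-entries, one from the middle of $\rho_{GL(2b-1,\bR)}$ and one from $\rho_{GL(1,\bR)}$, force this last $0$). Since its last coordinate vanishes, $\Lambda|_\ft$ is the unique $\Delta^+(\frk,\ft)$-dominant representative in its $W(\fg,\ft)$-orbit, so by Corollary~\ref{cor-HP} it suffices to search for a $K$-type $\tau$ of $\pi_{n,b}^{\epsilon_1,\epsilon_2,\epsilon_3}$ with $\{\tau-\sigma\}=\Lambda|_\ft-\rho_c$. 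A direct calculation, separating the cases $m\ge 2b$ and $m<2b$, should then give
\[
\Lambda|_\ft-\rho_c=(d,d-1,\dots,2,1,0,\dots,0),\qquad
d=\begin{cases} m-2b, & m\ge 2b,\\[2pt] 2b-m-1, & m<2b,\end{cases}
\]
with exactly $m-d=q+1$ trailing zeros.

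Next I would feed this into \eqref{eq-111ktypes}. Any $K$-type $E_{(\mu;\eta)}$ of $\pi_{n,b}^{\epsilon_1,\epsilon_2,\epsilon_3}$ has $\mu_{q+2}=\dots=\mu_m=0$, so in $\mu-\sigma$ (with $\sigma$ as in \eqref{eq-sigma}) the last $m-q-1=d$ coordinates are $-d,-(d-1),\dots,-1$ for either choice of the sign $\pm1$ in $\sigma$, and their absolute values already exhaust the nonzero entries of $\Lambda|_\ft-\rho_c$. Hence $\{\mu-\sigma\}=\Lambda|_\ft-\rho_c$ forces $\mu_i=m+1-i$ for $1\le i\le q+1$, so the only possible spin-lowest $K$-type is
\[
\tau:=E_{(m,\,m-1,\,\dots,\,m-q,\;0,\dots,0;\;\epsilon')},
\]
and its parity is $\sum_{i=1}^{q+1}(m+1-i)=\tfrac{(2m-q)(q+1)}{2}\equiv b\pmod{2}$ in both cases above. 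By Lemma~\ref{lemma-unipeven3}(b) every $K$-type of $\pi_{n,b}^{\epsilon_1,\epsilon_2,\epsilon_3}$ has parity $\epsilon_2+\epsilon_3$, so $\tau$ can occur only if $\epsilon_2+\epsilon_3\equiv b\pmod{2}$; this gives the ``only if'' direction. For the converse I would check directly from \eqref{eq-111ktypes} that when $\epsilon_2+\epsilon_3\equiv b$ the $K$-type $\tau$ indeed occurs: choosing $\alpha_j\in\{m-j,m+1-j\}$ with $\alpha_j\equiv\epsilon_1+\epsilon_2\pmod{2}$ (possible since the two values have opposite parity) makes the interlacing $\mu_j\ge\alpha_j\ge\mu_{j+1}$ automatic, and the remaining $O(1)$-parity condition $\sum_{i\le q+1}\mu_i-\sum_{j\le q}\alpha_j+\epsilon'\equiv\epsilon_3\pmod{2}$ collapses, using the parity count just made, to exactly $b\equiv\epsilon_2+\epsilon_3$. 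Then $\tau$ is the unique $K$-type satisfying \eqref{eq-HP2}, and $H_D(\pi_{n,b}^{\epsilon_1,\epsilon_2,\epsilon_3})\ne 0$ with spin-lowest $K$-type $\tau$.

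The step I expect to be the main obstacle is the boundary case $d=0$, i.e.\ $q=m-1$, which occurs exactly when $n\equiv 0\pmod{4}$ and $b=n/4$, or $n\equiv 2\pmod{4}$ and $b=(n+2)/4$. There the candidate $\tau$ has $\mu_m=1>0$, so its last parameter must be $\eta=\tfrac12$ rather than $\epsilon'$, and the description \eqref{eq-111ktypes} (whose derivation presupposes $\mu_m=0$) does not literally apply. In this case I would instead carry out the branching $O(2m)\downarrow O(2m-1)\times O(1)$ by hand to decide whether $E_{(m,m-1,\dots,1;\frac12)}$ appears in $\pi_{n,b}^{\epsilon_1,\epsilon_2,\epsilon_3}$. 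Its parity is $\tfrac{m(m+1)}{2}\equiv b\pmod{2}$, so the ``only if'' direction still follows from Lemma~\ref{lemma-unipeven3}(b); and one expects the branching computation to confirm that it appears precisely when $\epsilon_2+\epsilon_3\equiv b$, which would complete the proof.
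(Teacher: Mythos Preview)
Your proof is correct and follows essentially the same approach as the paper's: compute $\Lambda-\rho_c$, identify the unique candidate spin-lowest $K$-type with highest weight $(m,m-1,\dots,m-q,0,\dots,0)$, verify that its parity is $b$, and conclude via Lemma~\ref{lemma-unipeven3}(b). Regarding the boundary case $q=m-1$ you flag as an obstacle, the paper dispatches it with a one-line parenthetical remark that the label becomes $\eta=\tfrac{1}{2}$ there; the interlacing condition from the $O(n)\downarrow O(n-1)\times O(1)$ branching and the parity computation go through unchanged in that case, so no separate argument is actually required.
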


\begin{proof}
The representations in $\pi_{n, b}^{\epsilon_1,\epsilon_2,\epsilon_3}$ has infinitesimal character equal to
\begin{equation} \label{eq-eveninfl}
\Lambda = (\overbrace{p-1,p-3,\dots,q+2}^{(p-q-1)/2},\overbrace{q,q-1,\dots,1,0}^{q+1}) \in \ft^*,
\end{equation}
where $p = \max\{n-2b,2b-1\}$, $q = \min\{n-2b,2b-1\}$. Hence
\begin{equation}\label{Lambdamrhoc-thm-unipeven3}
\Lambda - \rho_c = (\overbrace{\frac{p-q-1}{2}, \dots,2,1}^{(p-q-1)/2},\overbrace{0,\dots,0}^{q+1}).
\end{equation}

%
Suppose $E_{(\mu; \eta)}$ is a $K$-type appearing in $\pi_{n,b}^{\epsilon_1 , \epsilon_2 , \epsilon_3}$, then by the arguments
in the previous Lemma, the last $m - q-1$ coordinates of $\mu$
must be zero. Then the PRV component of $E_{(\mu;\eta)} \otimes \mathrm{Spin}$ has highest weight of the form:
\begin{align*}
&\{ (\mu_1, \dots, \mu_{q+1}, 0,\dots,0) - (m, m - 1, \dots, 2,1)\}= \\
&\{(\mu_1 - m, \dots, \mu_{q+1} - m - q, -\frac{p-q-1}{2}, \dots, -2,-1)\}.
\end{align*}
In order for $E_{(\mu;\eta)}$ to be a spin lowest $K$-type of $\pi_u$, the above expression must be equal to $\Lambda - \rho_c$ in \eqref{Lambdamrhoc-thm-unipeven3}.
The only possibility for this to hold is when $\mu_i = m - (i-1)$ for all $1 \leq i \leq q+1$, i.e.,
$$E_{(\mu,\eta)} = E_{(m, \dots, m - q, 0,\dots,0;\eta)} \quad \quad (\text{or}\ E_{(m,\dots,2,1;\frac{1}{2})}\ \text{if}\ q = m -1)$$
We are left to show that $E_{(\mu,\eta)}$ shows up in $\pi_{n,b}^{\epsilon_1 , \epsilon_2 , \epsilon_3}$ only when $b \equiv \epsilon_2 + \epsilon_3 (\text{mod}\ 2)$.
Indeed, there is exactly one choice of non-negative integers $\alpha_1 \geq \dots \geq \alpha_q$ of the same parity such that
$$m \geq \alpha_1 \geq m -1 \geq \alpha_2 \geq \dots \geq m - (q-1) \geq \alpha_q \geq m - q.$$
Therefore, $E_{(\mu,\eta)}$ shows up in $\pi_{n,b}^{\epsilon_1 , \epsilon_2 , \epsilon_3}$ with multiplicity one if
and only if $\mu$ has the same parity as $\pi_{n,b}^{\epsilon_1 , \epsilon_2 , \epsilon_3}$. Note that
the parity of $\mu  = (m,\dots,m-q,0,\dots,0)$ is given by
\begin{align*}
\sum_{i=1}^{q+1} (m - (i-1)) \equiv \frac{(p+1)(q+1)}{2}
\equiv b(n-2b+1)  \equiv b \quad (\text{mod}\ 2),
\end{align*}
and consequently the result follows.
\end{proof}

\section{Dirac Cohomology of Generalized Speh Modules} \label{sec-spehs}
In this section, we study the Dirac cohomology of \eqref{eq-unitary2} when there is no unipotent
representations in the inducing module, i.e., we study unitary modules of the form
$$
\pi_s = \mathrm{Ind}_{\prod_{i=1}^t GL(2k_i,\mathbb{R})}^G(\bigotimes_{i=1}^t S_{k_i,m_i+1}).
$$
Since they are all parabolically induced from Speh representations,
we call them {\bf generalized Speh modules}.

As in \cite{DW1} and \cite{DW2}, we classify these modules by using {\bf s-chains}:
$$
\mathcal{A} := [A, A-2, \dots, a+2, a]_s,
$$
which is a descending sequence of non-negative integers where the difference between consecutive entries are all equal to two.
\begin{definition} \label{def-spehs}
Let $\pi_s = \mathrm{Ind}_{\prod_{i=1}^t GL(2k_i,\mathbb{R})}^{GL(2k,\mathbb{R})}(\bigotimes_{i=1}^t S_{k_i,m_i+1})$
be a generalized Speh module such that, after reordering the Levi factors if necessary, $k_1 + m_1 \geq \dots \geq k_t + m_t$.
The {\bf s-chains associated to $\pi_s$} are given by
$\bigcup_{i=1}^t \mathcal{A}_i$, where
$$\mathcal{A}_i := [m_i + 2k_i - 1, \dots, m_i + 3, m_i  + 1]_s.$$
\end{definition}

By \eqref{eq-spehinfl} and the line below it, the coordinates of the chains of
$\pi_s$ are precisely its infinitesimal character, hence we denote $\Lambda_s = \bigcup_{i=1}^t \mathcal{A}_i$
whenever there is no ambiguity. Moreover, the highest
weight of the lowest $K$-type of $\pi_s$ is given by
\begin{equation} \label{eq-lktspehs}
\bigcup_{i=1}^t \mathcal{L}_i,
\end{equation}
where $\mathcal{L}_i := (k_i + m_i + 1, \dots, k_i + m_i + 1)$. Note that the entries of $\mathcal{L}_i$ are one greater than the average value of $\mathcal{A}_i$. Using the ordering of Levi components in Definition \ref{def-spehs}, the above weight is $\Delta^+(\mathfrak{k},\ft)$-dominant.
\medskip

We give another realization of generalized Speh modules using cohomological induction. By Theorem 17.6 of \cite{V3} with the inclusion of Levi subgroups
$
\prod_{i=1}^t GL(k_i,\mathbb{C}) \subset GL(k,\mathbb{C}) \subset GL(2k,\mathbb{R}),
$
then $\pi_s$ can also be expressed as
\begin{equation} \label{eq-spehs}
\pi_s \cong \mathcal{R}_{\fq}^S(Z),
\end{equation}
where $\fq = \fl + \fu$ is the $\theta$-stable parabolic subalgebra with $\fl = \mathfrak{gl}(k,\mathbb{C})$, and
$$Z = \mathrm{Ind}_{\prod_{i=1}^t GL(k_i,\mathbb{C})}^{GL(k,\mathbb{C})}(\bigotimes_{i=1}^t{\det}^{\nu_i})$$
is a unitary $GL(k,\mathbb{C})$-module corresponding to the chains
$\bigcup_{i=1}^t \mathcal{A}_i^{-k}$ using the notations in \cite{DW1} (here $\mathcal{C}^{h}$ is defined to be the
chain by adding each coordinate of the chain $\mathcal{C}$ by the constant $h$).
In particular, the coordinates $\bigcup_{i=1}^t \mathcal{A}_i^{-k}$ gives the infinitesimal character $\Lambda_Z = \Lambda_s - \rho(\fu)$ of $Z$,
and its lowest $L \cap K$-type is of the form $\bigcup_{i=1}^t \mathcal{L}_i^{-(k+1)}$, i.e. the
entries of $\mathcal{L}_i^{-(k+1)}$ is equal to the average value of the entries of $\mathcal{A}_i^{-k}$ for
each $i$.

\begin{remark} \label{rmk-spehs}
Suppose $\pi_s$ has nonzero Dirac cohomology, then by \eqref{eq-HP}
the coordinates of its infinitesimal character $\Lambda_s = \bigcup_{i=1}^t \mathcal{A}_i$
must be distinct and non-negative. By \eqref{weakly-good} and the paragraph after that, this implies that $\pi_s \cong \mathcal{R}^S_{\fq}(Z)$ in \eqref{eq-spehs} is cohomologically induced in
the weakly good range. In other words, these modules are \underline{not} fully supported (see Theorem \ref{thm-Vogan} below). Therefore, they are not {\bf FS-scattered} in the sense
of \cite{DDY}.
\end{remark}

As we saw in the above remark, generalized Speh modules have a close relation with
unitary representations in $GL(k,\mathbb{C})$. In fact, the resemblance of these two cases goes deeper, as we see in
the theorem below.

\begin{theorem} \label{thm-spehs}
Let $\pi_s \cong \mathcal{R}^S_{\fq}(Z)$ be a generalized Speh module in the form of \eqref{eq-spehs},
such that its corresponding chains $\Lambda_s = \bigcup_{i=1}^t \mathcal{A}_i$ satisfy \eqref{eq-HP}. Then $\pi_s$ has nonzero Dirac cohomology.
More precisely, suppose the lowest $K$-type of $\pi_s$ has highest weight $\bigcup_{i=1}^t\mathcal{L}_i$
given by \eqref{eq-lktspehs}. Then $\pi_s$ has a unique, multiplicity-free spin lowest $K$-type with highest weight
$$\bigcup_{i=1}^t \mathcal{T}_i$$
where each $\mathcal{T}_i$ is obtained from $\mathcal{L}_i$ by applying Algorithm 2.2 of \cite{DW1}.
\end{theorem}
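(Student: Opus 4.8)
The plan is to verify the criterion of Corollary \ref{cor-HP} directly: produce the $K$-type $\bigcup_{i=1}^t\mathcal{T}_i$ as the one that realizes the Dirac cohomology, and argue it is the only one.

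\textbf{Reduction.} Since $\Lambda_s=\bigcup_i\mathcal{A}_i$ satisfies \eqref{eq-HP}, Remark \ref{rmk-spehs} shows $\pi_s\cong\mathcal{R}^S_\fq(Z)$ is cohomologically induced in the weakly good range, hence is unitary and irreducible with $\Lambda_s|_\fa=0$ and $\Lambda_s|_\ft=\Lambda_s$. Writing $\Lambda_s=(2\lambda_1,\dots,2\lambda_m)$ with $2\lambda_1>\dots>2\lambda_m\ge 0$ integral (here $m=k$), the vector $\Lambda_s-\rho_c$ has $j$-th coordinate $2\lambda_j-(m-j)$, which is strictly decreasing and non-negative, hence $\Delta^+(\frk,\ft)$-dominant. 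So one may take $w=1$ in Corollary \ref{cor-HP} (any other admissible $w$ differs by a sign on the last coordinate, which does not change the resulting $O(n)$-type since $\eta=\tfrac12$), and the whole statement reduces to $(\star)$: there is a $K$-type $E_{(\tau;\eta)}$ of $\pi_s$ with $\{\tau-\sigma\}=\Lambda_s-\rho_c$ for one of the two choices of $\sigma$ in \eqref{eq-sigma}, this $\tau$ is unique, occurs with multiplicity one, and $\tau=\bigcup_i\mathcal{T}_i$.

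\textbf{The candidate.} By induction in stages, $\pi_s|_K\cong\mathrm{Ind}_{\prod_i O(2k_i)}^{O(2k)}\big(\bigotimes_i S_{k_i,m_i+1}|_{O(2k_i)}\big)$, and each $S_{k_i,m_i+1}|_{O(2k_i)}=\bigoplus E_{(\mathcal{L}_i+2a^{(i)};\,1/2)}$ over decreasing non-negative integer vectors $a^{(i)}$, by \eqref{K-Speh}. Frobenius reciprocity together with the iterated $O(n)$-branching rule (used already in Lemma \ref{lemma-unipeven3}) then expresses the $K$-types of $\pi_s$ by an interlacing condition among the $\mathcal{L}_i+2a^{(i)}$. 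Since each $\mathcal{T}_i$, the output of Algorithm 2.2 of \cite{DW1} applied to the constant vector $\mathcal{L}_i$, is again of the form $\mathcal{L}_i+2a^{(i)}$ with $a^{(i)}$ decreasing and non-negative, the concatenation $\tau:=\bigcup_i\mathcal{T}_i$, ordered as in Definition \ref{def-spehs} so that it is $\Delta^+(\frk,\ft)$-dominant, is a $K$-type of $\pi_s$, with $\eta=\tfrac12$ read off from its positive last coordinate. Moreover, each $\mathcal{A}_i$ being a chain (consecutive gaps exactly $2$), the interlacing leaves a unique admissible family of intermediate parameters, so $\tau$ occurs with multiplicity one -- the same rigidity already exploited at the end of the proof of Theorem \ref{thm-unipeven3}.

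\textbf{Matching the PRV weight.} Writing $\sigma=(m,m-1,\dots,\pm 1)$ and $\rho_c=(m-1,\dots,0)$, one has $\Lambda_s-\rho_c=(\Lambda_s+\one)-\sigma$; but $\Lambda_s+\one$ is not in general a $K$-type of $\pi_s$ (already for a single block $S_{k_i,m_i+1}$ with $k_i\ge 2$ the vector $\mathcal{A}_i+\one$ has entries strictly below those of the lowest $K$-type $\mathcal{L}_i$), so $\{\tau-\sigma\}=\Lambda_s-\rho_c$ must hold through a non-trivial $W(\frk,\ft)$-element. Two equivalent routes finish this. \emph{Directly:} laying out the blocks of $\tau$ contiguously and writing $\tau-\sigma=\bigsqcup_i(\mathcal{T}_i-\sigma^{(i)})$ with $\sigma^{(i)}$ the consecutive restriction of $\sigma$ to the positions of block $i$, the equation in $(\star)$ becomes the assertion that the multiset of absolute values of the coordinates of $\tau-\sigma$ equals $\{(\Lambda_s-\rho_c)_j\}_j$, together with a parity condition on the negative coordinates -- which is precisely what Algorithm 2.2 of \cite{DW1} is designed to produce, so it reduces to the correctness of that algorithm after re-indexing the positions for $GL(2k,\bR)$. \emph{Via \eqref{eq-spehs}:} since $\pi_s=\mathcal{R}^S_\fq(Z)$ is weakly good with $\fl=\mathfrak{gl}(k,\bC)$, the behaviour of Dirac cohomology under cohomological induction gives $H_D(\pi_s)\ne 0$ with spin-lowest $K$-type obtained from the spin-lowest $L\cap K$-type of the $GL(k,\bC)$-module $Z$ by a $\rho(\fu)$-shift; \cite{DW1} identifies the latter with $\bigcup_i\big(\text{Algorithm 2.2 of }\mathcal{L}_i^{-(k+1)}\big)$, and the shift converts it into $\bigcup_i\mathcal{T}_i$.

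\textbf{Uniqueness and the main difficulty.} For uniqueness in $(\star)$, the coordinates of $\Lambda_s$ being distinct non-negative integers forces any $w\in W(\fg,\ft)$ with $w\Lambda_s|_\ft-\rho_c$ still $\Delta^+(\frk,\ft)$-dominant to fix $\Lambda_s|_\ft$ up to the sign of the last coordinate; combined with the multiplicity-one statement and the norm characterization in Corollary \ref{cor-HP}, this identifies $\bigcup_i\mathcal{T}_i$ as the unique multiplicity-free spin-lowest $K$-type. The genuine obstacle is the combinatorial core above: tracking which positions each chain $\mathcal{A}_i$ occupies in the sorted $\Lambda_s$ against the contiguous layout of $\tau$, handling signs and the parity condition when passing to dominant $W(\frk,\ft)$-representatives, and checking that no $K$-type of $\pi_s$ other than $\bigcup_i\mathcal{T}_i$ hits the target PRV weight -- exactly the point at which the verification of Algorithm 2.2 in \cite{DW1} has to be imported and adapted to the position bookkeeping of $GL(2k,\bR)$.
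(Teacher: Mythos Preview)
Your second route in ``Matching the PRV weight''---passing through cohomological induction and the known Dirac cohomology of the $GL(k,\bC)$-module $Z$---is exactly the paper's proof. The paper invokes Theorem~B of \cite{DH}, which gives
\[
H_D\bigl(\mathcal{R}^S_\fq(Z)\bigr)\;\cong\;\mathcal{R}^S_{\widetilde K}\bigl(H_D(Z)\otimes\bC_{-\rho(\fu\cap\fp)}\bigr),
\]
then reads off $H_D(Z)$ and the spin lowest $L\cap K$-type $\bigcup_i\mathcal{T}_i^{-(k+1)}$ of $Z$ from \cite{BP1} or \cite{DW1}, checks that the resulting $\widetilde K$-weight $\Lambda_s-\rho_c$ is dominant, and recovers $\bigcup_i\mathcal{T}_i$ after the bottom-layer shift by $2\rho(\fu\cap\fp)=(k+1,\dots,k+1)$ (not $\rho(\fu)$, which is $(k,\dots,k)$; your shift is off by one). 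Uniqueness and multiplicity one are inherited for free from the corresponding statements for $Z$ via this functoriality, so no separate argument is needed.

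Your first ``direct'' route, however, has a genuine error. You claim that each $\mathcal{T}_i$ is of the form $\mathcal{L}_i+2a^{(i)}$ with $a^{(i)}$ a decreasing non-negative integer vector, and hence is an $O(2k_i)$-type of the individual Speh module $S_{k_i,m_i+1}$. This is false: in the paper's own Example following the theorem, $\mathcal{L}_1=(6,6,6,6,6)$ and $\mathcal{T}_1=(6,6,6,7,8)$, so $\mathcal{T}_1-\mathcal{L}_1=(0,0,0,1,2)$, which is neither even nor decreasing. Thus $\mathcal{T}_1$ does not occur in $S_{5,5}|_{O(10)}$ at all, and your branching/interlacing argument for exhibiting $\bigcup_i\mathcal{T}_i$ in $\pi_s|_K$ with multiplicity one collapses. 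The $K$-type $\bigcup_i\mathcal{T}_i$ does appear in $\pi_s$, but it arises only after the mixing implicit in parabolic induction to $O(2k)$, not block by block; establishing this directly would require a substantially different combinatorial argument than the one you sketch. Since the cohomological-induction route already delivers existence, multiplicity one, and uniqueness simultaneously, you should drop the direct approach entirely.
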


\begin{proof}
Since $Z$ is in the weakly good range, the Dirac cohomology of $\pi_s \cong \mathcal{R}^S(Z)$ can be obtained by Theorem B of \cite{DH} saying that
$$
H_D(\mathcal{R}^S(Z)) \cong \mathcal{R}^S_{\widetilde{K}}(H_D(Z) \otimes \mathbb{C}_{-\rho(\fu \cap \fp)}).
$$

By \cite{BP1} or \cite{DW1}, the Dirac cohomology of the $(\fl, L \cap K)$-module $Z$
 consists of a single $\widetilde{L \cap K}$-type with highest weight
$$(\Lambda_s - \rho(\fu)) - \rho(\fl\cap\frk)$$
as given by Theorem \ref{thm-HP}. Since $\Lambda_s \in \ft^*$ is assumed to satisfy \eqref{eq-HP},
$$
[(\Lambda_s - \rho(\fu)) - \rho(\fl\cap\frk)] -\rho(\fu \cap \fp) + 2\rho(\fu \cap \fp) = \Lambda_s - \rho(\fu\cap\frk) - \rho(\fl\cap\frk) = \Lambda_s - \rho_c
$$
is $\Delta^+(\mathfrak{k},\ft)$-dominant. Therefore,
$\mathcal{R}^S_{\widetilde{K}}(H_D(Z) \otimes \mathbb{C}_{-\rho(\fu \cap \fp)})$ is nonzero and consists of a single $\widetilde{K}$-type
with highest weight $\Lambda_s - \rho_c$. Consequently, $H_D(\mathcal{R}^S(Z))$ is nonzero.

Moreover, from the proof of \cite{DH}, one can also keep track of the spin lowest $K$-types of $\pi_s \cong \mathcal{R}^S(Z)$
from that of $Z$.
Namely,
the lowest $(L \cap K)$-type of $Z$ has highest weight $\bigcup_{i=1}^t \mathcal{L}_i^{-(k+1)}$ by Remark \ref{rmk-spehs}(b).
By the main result of \cite{DW1}, the unique, multiplicity free spin lowest $(L \cap K)$-type of $Z$ has highest weight
$\bigcup_{i=1}^t \mathcal{T}_i^{-(k+1)}.$
Then the spin lowest $K$-type of $\mathcal{R}^S(Z)$ has highest weight
$$
\bigcup_{i=1}^t \mathcal{T}_i^{-(k+1)} + 2\rho(\fu \cap \fp) = \bigcup_{i=1}^t \mathcal{T}_i^{-(k+1)} + (k+1,\dots,k+1) = \bigcup_{i=1}^t \mathcal{T}_i.
$$
The result follows.
\end{proof}

\begin{example}
Consider the generalized Speh module $\pi_s$ in $GL(16,\mathbb{R})$ corresponding to the chains
\begin{align*}
[9 && && 7&& && 5&& && 3 && && 1]_s\\
&& && && && && [4 && && 2 && && 0]_s.
\end{align*}
Then $\pi_s$ has infinitesimal character $\Lambda_s = (9,7,5,4,3,2,1,0)$,
and its lowest $K$-type is equal to $E_{(6,6,6,6,6,3,3,3;\frac{1}{2})}$, i.e., $\mathcal{L}_1 = (6,6,6,6,6)$ and $\mathcal{L}_2 = (3,3,3)$.
Theorem \ref{thm-spehs} implies that
$$\mathcal{T}_1 = (6,6,6,7,8) \quad \mathcal{T}_2 = (1,2,3)$$
and hence the spin lowest $K$-type is equal to $E_{(8,7,6,6,6,3,2,1;\frac{1}{2})}$.
Note that the PRV component of $E_{(8,7,6,6,6,3,2,1;\frac{1}{2})} \otimes \mathrm{Spin}$ has highest weight
$$\{(8,7,6,6,6,3,2,1) - (8,7,6,5,4,3,2,1)\} = \{(0,0,0,0,1,2,0,0)\} = (2,1,0,0,0,0,0,0)$$
which is equal to $\Lambda - \rho_c = (9,7,5,4,3,2,1,0) - (7,6,5,4,3,2,1,0)$. Therefore, \eqref{eq-HP2} is satisfied.\hfill\qed
\end{example}

\section{Dirac Cohomology of General Unitary Modules}
In this section, we study Dirac cohomology of all unitary representations of
$GL(n,\mathbb{R})$ given by
\eqref{eq-unitary2}. As in the previous section, we associate to each unitary representation
in \eqref{eq-unitary2} a union of chains:
$$\Lambda := \bigcup_{i=1}^t \mathcal{A}_i \cup \mathcal{U}$$
where each $\mathcal{A}_i$ corresponds to the Speh module $S_{k_i,m_i+1}$,
and the {\bf u-chain} $\mathcal{U} = [u_1, \dots, u_{\lfloor \frac{r}{2} \rfloor}]_u$
corresponds to the unipotent representation $\pi_u$, whose coordinates are
given by $\Lambda_u$, the infinitesimal character of $\pi_u$ restricted to $\ft$.
Under this setting, its infinitesimal character $\Lambda$ satisfies \eqref{eq-HP}
if and only if all the coordinates of its corresponding chains are distinct
and non-negative (positive if $n$ is odd). We are now ready to state the main theorem of this manuscript.

\begin{theorem}\label{thm-main}
Let $\pi$ be a unitary $(\fg,K)$-module of the form \eqref{eq-unitary2} of $GL(n,\mathbb{R})$. Let $\Lambda = \bigcup_{i=1}^t \mathcal{A}_i \cup \mathcal{U}$ be the chains associated to $\pi$ such that \eqref{eq-HP} is satisfied.
Then $\pi$ has nonzero Dirac cohomology if and only if $\pi_u$ has
nonzero Dirac cohomology.

Moreover, suppose $\bigcup_{i=1}^t \mathcal{T}_i$
is the highest weight of the spin lowest $K$-type of the generalized Speh module corresponding
to the chains $\bigcup_{i=1}^t \mathcal{A}_i$ given in Section \ref{sec-spehs},
and $\mathcal{T}_u$ is the highest weight of the spin lowest $K$-type
of $\pi_u$. Then the spin lowest $K$-type of $\pi$ has highest weight
\begin{equation}\label{kappa}
\mathcal{S}=(\mathcal{S}_1;\dots;\mathcal{S}_t;\mathcal{S}_u)
\end{equation}
where each $\mathcal{S}_i$ and $\mathcal{S}_u$ are obtained from $\mathcal{T}_i$ and $\mathcal{T}_u$
in the same way as  Theorem 3.1 of \cite{DW3}.
\end{theorem}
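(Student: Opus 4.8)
First I would transport the problem to $K=O(n)$. Writing $\pi=\mathrm{Ind}_P^G(\bigotimes_{i=1}^t S_{k_i,m_i+1}\otimes\pi_u)$ for the real parabolic $P=LN$ with $L=\prod_{i=1}^t GL(2k_i,\bR)\times GL(r,\bR)$, transitivity of induction lets me separate the Speh blocks, and since $G/P\cong K/(L\cap K)$ with $L\cap K=\prod_i O(2k_i)\times O(r)$ block-diagonal in $O(n)$, Frobenius reciprocity gives
$$\pi|_K\;\cong\;\mathrm{Ind}_{\prod_i O(2k_i)\times O(r)}^{O(n)}\!\Big(\bigotimes_{i=1}^t\big(S_{k_i,m_i+1}|_{O(2k_i)}\big)\otimes\big(\pi_u|_{O(r)}\big)\Big),$$
whose factor $K$-type content is recorded by \eqref{K-Speh} and by the formulas of Section \ref{sec-unipotent}. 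By Corollary \ref{cor-HP} together with the reduction following it (under which $\Lambda|_\fa=0$ and $\Lambda|_\ft$ is as in \eqref{eq-HP}), $H_D(\pi)\neq 0$ if and only if $\|\{\tau-\sigma\}+\rho_c\|=\|\Lambda\|$ for some $K$-type $E_{(\tau;\eta)}$ of $\pi$; Parthasarathy's inequality makes $\|\Lambda\|$ a lower bound for this quantity over all $K$-types, and the $\tau$ realising it are precisely the spin lowest $K$-types.

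For the backward implication, assume $H_D(\pi_u)\neq 0$. Every tensor factor above then has non-zero Dirac cohomology: the chains $\mathcal{A}_i$ of each Speh block inherit \eqref{eq-HP} from $\Lambda$, so Theorem \ref{thm-spehs} applies and supplies the spin lowest $K$-type highest weights $\mathcal{T}_i$, while $\pi_u$ supplies $\mathcal{T}_u$ by Section \ref{sec-unipotent}. I would then feed $\mathcal{T}_1,\dots,\mathcal{T}_t,\mathcal{T}_u$ into Theorem 3.1 of \cite{DW3}, applied to the iterated real-parabolic induction above: it normalises each block to $\mathcal{S}_i,\mathcal{S}_u$, produces the $\Delta^+(\frk,\ft)$-dominant weight $\mathcal{S}=(\mathcal{S}_1;\dots;\mathcal{S}_t;\mathcal{S}_u)$, and certifies that $E_{(\mathcal{S};\eta)}$ occurs in $\pi|_K$ with $\{\mathcal{S}-\sigma\}=\Lambda|_\ft-\rho_c$ -- a PRV-component computation of the type carried out in the example following Theorem \ref{thm-spehs}. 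This gives $H_D(\pi)\neq 0$ with $\mathcal{S}$ as its (unique, multiplicity-free) spin lowest $K$-type.

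For the forward implication, suppose $H_D(\pi_u)=0$. Since $\Lambda$ satisfies \eqref{eq-HP}, the chain $\mathcal{U}$ does too, so $\pi_u$ is one of the unipotent modules classified in Section \ref{sec-unipotent}, and by Proposition \ref{prop-unip} and Theorems \ref{thm-oddunip}, \ref{thm-unipeven2}, \ref{thm-unipeven3} the vanishing can only occur for $n$ even with the parity of $\mathcal{U}$ mismatched. I must rule out any $K$-type of $\pi$ meeting \eqref{eq-HP2}. The mechanism I would use is a separation estimate: for a $K$-type $E_{(\tau;\eta)}$ of the induced module, Frobenius reciprocity forces $K$-types $\tau_1,\dots,\tau_t$ of the Speh factors and $\tau_u$ of $\pi_u$ with $\bigotimes_i E_{\tau_i}\otimes E_{\tau_u}\hookrightarrow E_\tau|_{\prod O(2k_i)\times O(r)}$, and $\|\{\tau-\sigma\}+\rho_c\|$ is bounded below by a combination of the analogous quantities attached to the sub-chains $\mathcal{A}_i$ and $\mathcal{U}$ (the shifts $\rho(\fu\cap\fp)$ that already appear in the proof of Theorem \ref{thm-spehs} enter here), with equality only when each $\tau_j$ is a spin lowest $K$-type of its factor. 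The Speh factors attain their bounds (Theorem \ref{thm-spehs}); but the parity constraints of orthogonal branching (compare \eqref{eq-111ktypes}) keep every $O(r)$-type of $\pi_u$ on the wrong parity side, so no $\tau_u$ can attain the contribution of $\mathcal{U}$. Hence the estimate is strict for every $\tau$, $\|\{\tau-\sigma\}+\rho_c\|>\|\Lambda\|$, and $H_D(\pi)=0$.

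The hard part is exactly the separation estimate and its equality case -- which is what the methods behind Theorem 3.1 of \cite{DW3} are designed to supply. Branching from $\prod_j O(n_j)$ to $O(n)$ is not a concatenation of highest weights, so a priori a $K$-type of $\pi$ need not decompose along the chains $\bigcup_i\mathcal{A}_i\cup\mathcal{U}$; the real work is to show that the $K$-types extremal for $\|\{\cdot-\sigma\}+\rho_c\|$ do decompose, chain by chain, into (normalisations of) the spin lowest $K$-types of the factors. Proving this means playing the explicit orthogonal branching rules against the rigidity of $\{\tau-\sigma\}=w\Lambda|_\ft-\rho_c$ forced by the distinctness (and positivity, resp. non-negativity) of the coordinates of $\Lambda$, and isolating the $\pmod 2$ invariant of orthogonal induction that transports the parity obstruction on $\pi_u$ to $\pi$. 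Once that is in place, the remaining steps -- invoking Theorem \ref{thm-spehs} and Section \ref{sec-unipotent}, and assembling $\mathcal{S}$ -- are bookkeeping.
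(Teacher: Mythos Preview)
Your approach diverges from the paper's at the very first step: the paper does \emph{not} analyse $K$-types through the real parabolic presentation. Instead it invokes independence of polarization (Section~17 of \cite{V3}) to rewrite $\pi \cong \mathcal{R}_{\fq}^S(Z \boxtimes \pi_u)$ as a cohomologically induced module in the weakly fair range from the $\theta$-stable parabolic with Levi $\fl = \mathfrak{gl}(k,\bC) \oplus \mathfrak{gl}(r,\bR)$, where $Z$ is the associated unitary $GL(k,\bC)$-module. This gives access to Vogan's Blattner-type formula (Theorem~6.3.12 of \cite{V1}), expressing $[\pi:E_{(\mathcal{S};\eta)}]_K$ as an alternating sum over the $(L\cap K)$-types in $H^q(\fu\cap\frk,E_{(\mathcal{S};\eta)})$. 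The backward implication then reduces to a concrete vanishing statement for the $q>0$ terms (the paper's Proposition~\ref{prop-vanish}), proved by comparing $|\cdot|_1$-norms of $(L\cap K)$-constituents, after which the $q=0$ term is handled exactly as in \cite{DW3} because the first Levi factor is complex. Your direct appeal to Theorem~3.1 of \cite{DW3} to ``certify that $E_{(\mathcal{S};\eta)}$ occurs in $\pi|_K$'' is therefore a gap: that theorem is set in complex classical groups, and in the paper it supplies only the \emph{combinatorial recipe} for $\mathcal{S}$, not the occurrence in $\pi$. Your separation estimate would be an alternative, but as you acknowledge it is the hard step, and orthogonal branching from $\prod_i O(2k_i)\times O(r)$ to $O(n)$ does not obviously respect the norm $\|\{\tau-\sigma\}+\rho_c\|$ in the additive way you need; the $\theta$-stable route sidesteps this entirely.

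For the forward implication your norm estimate is also unnecessary. The paper's argument is pure parity: pair $\pi_u'$ (with $H_D(\pi_u')=0$, necessarily $n$ even) with a unipotent $\pi_u$ of the same type but opposite parity, so that $H_D(\pi_u)\neq 0$; the already-established backward implication for $\pi = \mathcal{R}^S_{\fq}(Z\boxtimes\pi_u)$ then pins down $\Lambda-\rho_c = \{\mathcal{S}-\sigma\}$ for some $K$-type $\mathcal{S}$ of $\pi$. Since $\{\cdot\}$ preserves parity (all coordinates being integral) and since Frobenius reciprocity on the real parabolic presentation \eqref{eq-unitary2} shows $\pi$ and $\pi'$ have opposite parity, every $K$-type of $\pi'$ sits on the wrong side and \eqref{eq-HP2} fails for all of them. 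The $\pmod 2$ invariant you mention at the end is thus the entire argument for this direction, not an ingredient of a harder one.
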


\begin{example}
Let $\pi$ be a unitary $(\fg, K)$-module of $GL(22,\mathbb{R})$ with associated chains
\begin{align*}
[11 && && 9]_s && && && && [5 && && 3 && && 2 && && 1 && &&0]_u\\
&&[10 && && 8 && && 6 && && 4]_s.
\end{align*}
Then
$$\pi = \mathrm{Ind}_{GL(4) \times GL(8) \times GL(10)}^{G} (S_{2,9} \otimes S_{4,4} \otimes \pi_{10,2}^{\epsilon_1,\epsilon_2,\epsilon_3}).$$
By Theorem \ref{thm-unipeven3} and Theorem \ref{thm-main}, $\pi$ has nonzero Dirac cohomology if and only if
$(\epsilon_1,\epsilon_2,\epsilon_3) = (0,0,0), (1,0,0), (0,1,1), (1,1,1)$.

We consider the case $(\epsilon_1,\epsilon_2,\epsilon_3) = (0,0,0)$. Then $\pi$ has lowest $K$-type
$$(11,11, 8,8,8,8, 0,0,0,0,0; 0).$$
The spin lowest $K$-types of the generalized Speh module and $\pi_{10,2}^{\epsilon_1,\epsilon_2,\epsilon_3}$
have highest weights:
$$
(12,11,8,8,8,7), \quad (5,4,3,2,0).
$$
By Theorem 3.1 of \cite{DW3}, the spin lowest $K$-type of $\pi$ has highest weight
$$\tau = (12,11,9,8,8,7,5,4,3,2,1).$$

Note that $\{\tau - \sigma\}$ is equal to
$$
\{(12,11,9,8,8,7,5,4,3,2,1) - (11,10,9,8,7,6,5,4,3,2,1)\}
= (1,1,1,1,0,0,0,0,0,0,0).
$$
On the other hand, $\Lambda - \rho_c$ is equal to
$$
(11,10,9,8,6,5,4,3,2,1,0) - (10,9,8,7,6,5,4,3,2,1,0) = (1,1,1,1,0,0,0,0,0,0,0).
$$
Therefore $\{\tau - \sigma\} = \Lambda - \rho_c$ and \eqref{eq-HP2} is satisfied.
\hfill\qed
\end{example}

\begin{proof}

By the ``independence of polarization" results in Section 17 of \cite{V3},
one can express
\begin{equation}\label{pi-2nd-way}
\pi \cong \mathcal{R}_{\fq}^S(Z \otimes \pi_u)
\end{equation}
as a cohomologically induced module in the weakly fair range
(and the cohomology vanishes in other degrees),
where
$\fq = \fl + \fu$ is the theta-stable parabolic subalgebra whose Levi component is of the form $\fl = \fl_1 + \fl_2$, with $\fl_1 = \mathfrak{gl}(k,\mathbb{C})$ and $\fl_2 = \mathfrak{gl}(r,\mathbb{R})$; and
$Z$ is the unitary $(\fl_1, L_1 \cap K)$-module corresponding to the chains $\bigcup_{i=1}^t \mathcal{A}_i^{-(k+r)}$ with lowest $K$-type $\bigcup_{i=1}^t \mathcal{L}_i^{-(k+r+1)}$ and spin lowest $K$-type $\bigcup_{i=1}^t \mathcal{T}_i^{-(k+r+1)}$ similar to Section \ref{sec-spehs}.

By Theorem 6.3.12 of \cite{V1}, we have the Blattner-type formula for $\pi \cong \mathcal{R}_{\fq}^S(Z \boxtimes \pi_u)$ given by
\begin{equation}\label{eq-blattner}
\begin{aligned}
\left[\pi: E_{\kappa}\right]_K = \sum_{q,d} (-1)^q \left[\left( Z \boxtimes \pi_u \right) \otimes \mathbb{C}_{2\rho(\fu \cap \fp)} \otimes S^d(\fu \cap \fp): H^q(\fu \cap \mathfrak{k}, E_{\kappa})\right]_{L \cap K},
\end{aligned}
\end{equation}
where we adopt a slight abuse of notation by writing
$$Z \boxtimes \pi_u = (Z \boxtimes \pi_u)|_{L \cap K} =
\bigoplus_{\psi_i \in \widehat{L_i \cap K};\ i = 1,2} m_{\psi_1,\psi_2} (E_{\psi_1}^{L_1 \cap K} \boxtimes E_{\psi_2}^{L_2 \cap K}), \quad m_{\psi_1,\psi_2} \in \mathbb{N}.$$
Since
$\mathbb{C}_{2\rho(\fu \cap \fp)} \cong {\det}^{k+r+1} \boxtimes \mathrm{triv}$ as $L \cap K$-modules,
\eqref{eq-blattner} can be rewritten as
\begin{equation}\label{eq-blattner2}
\begin{aligned}
\left[\pi: E_{\kappa}\right]_K = \sum_{q,d} (-1)^q \left[\left( Z^{\#} \boxtimes \pi_u \right) \otimes S^d(\fu \cap \fp): H^q(\fu \cap \mathfrak{k}, E_{\kappa})\right]_{L \cap K},
\end{aligned}
\end{equation}
where $Z^{\#}=Z\otimes_{\mathbb{C}}{\det}^{k+r+1}$ has associated chains
$
\bigcup_{i} \mathcal{A}_i^{-(k+r) + (k+r+1)} = \bigcup_{i} \mathcal{A}_i^{1}
$, lowest $(L_1 \cap K)$-type
$\bigcup_{i} \mathcal{L}_i^{-(k+r+1) + k+r+1} = \bigcup_i \mathcal{L}_i$
and spin lowest $(L_1\cap K)$-type $\bigcup_{i} \mathcal{T}_i$.

Also, one can see that $\fu \cap \fp$ is equal to
\begin{equation}\label{Smuprimep}
\left(E^{L_1 \cap K}_{(1,0,\dots,0)} \boxtimes E^{L_2 \cap K}_{(1,0,\dots,0)}\right) \oplus \left(E^{L_1 \cap K}_{(2,0,\dots,0)} \boxtimes E^{L_2 \cap K}_{(0,\dots,0)}\right). \end{equation}
Note that the above formulas differ from
Equation (11) in \cite{DW3} for Type B (if $n$ is odd) and Type D (if $n$ is even) by replacing
$E^{L_1 \cap K}_{(1,1,0,\dots,0)}$ with $E^{L_1 \cap K}_{(2,0,\dots,0)}$.

We claim that for all $q > 0$, the summation on the right hand side \eqref{eq-blattner2} is zero
for $E_{\kappa} = E_{(\mathcal{S};\eta)}$, whose proof will be postponed to Proposition \ref{prop-vanish}.
Assuming the claim to be true, \eqref{eq-blattner2} can be simplified to
\begin{align*}
[\pi|_K: E_{(\mathcal{S};\eta)}] &=  \left[\left( Z^{\#} \boxtimes \pi_u \right) \otimes S^d(\fu \cap \fp): H^0(\fu \cap \mathfrak{k}, E_{(\mathcal{S},\eta)}) \right] \\
&=  \left[\left( Z^{\#} \boxtimes \pi_u \right) \otimes S^d(\fu \cap \fp): E^{L_1 \cap K}_{(\mathcal{S}_1,\dots,S_t)} \boxtimes  E^{L_2 \cap K}_{(S_u;\eta)} \right]
\end{align*}
Then the argument similar to Section 3 of \cite{DW3} implies that the last line is equal to $1$, and $\mathcal{S}$ satisfies \eqref{eq-HP2}. Consequently $H_D(\pi) \neq 0$, and $E_{(\mathcal{S};\eta)}$ is a spin lowest $K$-type of $\pi$.

On the other hand, if $H_D(\pi_u') = 0$ (which happens only for $G = GL(n,\mathbb{R})$
with $n$ even), we wish to show that $\pi' \cong \mathcal{R}^S_{\fq}(Z \otimes \pi_u')$ also has
zero Dirac cohomology.

Let $\pi_u$ be of the same `type' as $\pi_u'$, in the sense that
both modules are of the form $\pi_{n,min}^{\epsilon_1,\epsilon_2}$ or $\pi_{n,b}^{\epsilon_1,\epsilon_2,\epsilon_3}$,
yet $\pi_u$ and $\pi_u'$ have different parity. Then $H_D(\pi_u) \neq 0$ by the theorems in Section 2.2,
and the above arguments imply that $E_{(\mathcal{S};\eta)}$ is a spin lowest $K$-type of
$\pi \cong \mathcal{R}^S_{\fq}(Z \otimes \pi_u)$, i.e. $E_{(\mathcal{S};\eta)}$ satisfies
$$\{\mathcal{S} - \sigma\} = \Lambda - \rho_c.$$
Note that $\{ \cdot \}$ does not change the parity of $\mathcal{S} - \sigma$, since all coordinates
inside the bracket are integral.

By expressing $\pi$ and $\pi'$ in the form of \eqref{eq-unitary2} and applying Frobenius reciprocity, one can easily check that $\pi$ and $\pi'$ have opposite parity.
Therefore, for all $K$-types $E_{(\mathcal{S}';\eta')}$ appearing in $\pi'|_K$,
$\{\mathcal{S}' - \sigma\}$ must have different parity as $\{\mathcal{S} - \sigma\} = \Lambda - \rho_c$,
which means \eqref{eq-HP2} cannot be satisfied for any $K$-type appearing in $\pi'$, and we have
$H_D(\pi') = 0$ as desired.
\end{proof}

The proof of Theorem \ref{thm-main} finishes by the following:
\begin{proposition} \label{prop-vanish}
Using the notations in \eqref{eq-blattner2}, we have
\begin{equation} \label{eq-impossible}
\left[\left( Z^{\#} \boxtimes \pi_u \right) \otimes S^k(\fu \cap \fp): H^q(\fu \cap \mathfrak{k}, E_{(\mathcal{S},\eta)}) \right] = 0
\end{equation}
for all $k \geq 0$ and $q > 0$.
\end{proposition}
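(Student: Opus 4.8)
The plan is to establish the cohomology vanishing \eqref{eq-impossible} by combining Kostant's theorem on $\fu\cap\frk$-cohomology with the parity and support constraints that our chains impose. First I would invoke Kostant's theorem to identify $H^q(\fu\cap\frk, E_{(\mathcal S;\eta)})$ as an $L\cap K$-module: its $L\cap K$-types are of the form $E^{L_1\cap K}_{w(\mathcal S_1,\dots,\mathcal S_t+\rho_{\frk}) - \rho_{\frk}} \boxtimes E^{L_2\cap K}_{(\mathcal S_u;\eta)}$, indexed by the minimal-length coset representatives $w$ in $W(\fl\cap\frk)\backslash W(\frk)$ of length $q$, restricted to those $w$ for which the twisted action lands in the $\Delta^+(\fl\cap\frk,\ft)$-dominant chamber. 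The key observation is that since $\fl_2=\mathfrak{gl}(r,\mathbb R)$ meets $\frk$ in a factor whose contribution to $\fu\cap\frk$ appears only through $E^{L_1\cap K}_{(1,0,\dots,0)}\boxtimes E^{L_2\cap K}_{(1,0,\dots,0)}$ in \eqref{Smuprimep}, and since $\mathcal S_u$ is already the spin-lowest $K$-type datum of $\pi_u$, the $L_2\cap K$-component forces very tight restrictions on which $(q,d)$ can contribute.

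Next I would carry out the core estimate exactly as in Section 3 of \cite{DW3}, adapted to our setting. The idea is that the weight $\mathcal S = (\mathcal S_1;\dots;\mathcal S_t;\mathcal S_u)$ is constructed (via Theorem 3.1 of \cite{DW3}, i.e.\ the interlacing of the $\mathcal T_i$ and $\mathcal T_u$) precisely so that it is the \emph{minimal-norm} weight among all that can arise. Concretely, one argues by a norm/weight comparison: if the bracket in \eqref{eq-impossible} were nonzero for some $q>0$, then unwinding Kostant gives an $L_1\cap K$-type of $Z^{\#}$ (namely a $K$-type of the generalized Speh module, whose highest weights are the $\bigcup \mathcal L_i$ shifted by even amounts as in \eqref{K-Speh}) together with an $L_2\cap K$-type of $\pi_u$, which after adding $S^d(\fu\cap\frk\cap\fp)$-weights and applying the Kostant $w$-action of length $q>0$ yields $E_{(\mathcal S;\eta)}$ — but the length-$q$ Weyl element strictly \emph{increases} the relevant norm $\|\{\cdot\}+\rho_c\|$, contradicting the fact (from Corollary \ref{cor-HP} and the explicit construction) that $\mathcal S-\sigma$ already achieves $\Lambda-\rho_c$, the global minimum. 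The replacement of $E^{L_1\cap K}_{(1,1,0,\dots,0)}$ by $E^{L_1\cap K}_{(2,0,\dots,0)}$ noted after \eqref{Smuprimep} only affects which even shifts of $\mathcal L_i$ occur; since the generalized Speh $K$-type formula \eqref{K-Speh} already allows arbitrary non-negative even shifts $2a_i$, this substitution does not enlarge the relevant set of $L_1\cap K$-types and the argument of \cite{DW3} goes through verbatim.

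The main obstacle I anticipate is bookkeeping the interaction between the two Levi factors: in \cite{DW3} the Levi is a single $\mathfrak{gl}(k,\mathbb C)$, whereas here we have $\fl=\mathfrak{gl}(k,\mathbb C)+\mathfrak{gl}(r,\mathbb R)$, so both $\fu\cap\frk$ and the Kostant cohomology split, and one must check that the cross-term $E^{L_1\cap K}_{(1,0,\dots,0)}\boxtimes E^{L_2\cap K}_{(1,0,\dots,0)}$ in \eqref{Smuprimep} cannot conspire with a positive-length $w$ to produce $E_{(\mathcal S;\eta)}$. I would handle this by treating the $L_2\cap K$-coordinates separately: the $\pi_u$-part of the target, $(\mathcal S_u;\eta)$, is determined by $\mathcal T_u$ in the same rigid way as in the $\pi_u$-only case, so any contribution must use $d$ cross-term factors with $d$ equal to the number of $L_1$-slots that $\mathcal S_u$ "borrows", and then the residual problem on the $L_1$-side is literally the $GL(k,\mathbb C)$ computation of \cite{DW1,DW3}. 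Once the problem is reduced this way, the vanishing for $q>0$ follows from the minimality of the norm of $\mathcal S$, which is exactly what Theorem 3.1 of \cite{DW3} guarantees. I would conclude by noting that this reduction is the only new ingredient; everything else is a direct citation of the cited computations.
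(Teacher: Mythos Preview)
Your overall architecture---invoke Kostant for $H^q(\fu\cap\frk,E_{(\mathcal S;\eta)})$, split into the $L_1\cap K$ and $L_2\cap K$ pieces, and use the $L_2$-side to constrain the $L_1$-side---is the same as the paper's. But the mechanism you propose for the contradiction is not the one that actually works, and as stated it has a gap.

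You say the length-$q$ Weyl element ``strictly increases the relevant norm $\|\{\cdot\}+\rho_c\|$'', and that this contradicts minimality of $\mathcal S$ as a spin-lowest $K$-type. Weyl elements preserve $\|\mathcal S+\rho_c\|$, so no norm increase occurs; and in any case the spin-lowest-$K$-type minimality is a statement about $K$-types of $\pi$, not about which $L\cap K$-types can occur in $(Z^{\#}\boxtimes\pi_u)\otimes S^\bullet(\fu\cap\fp)$. There is no direct implication from ``$E_{(\mathcal S;\eta)}$ minimizes $\|\{\tau-\sigma\}+\rho_c\|$'' to the vanishing \eqref{eq-impossible}. Likewise, your remark that the switch from $E^{L_1\cap K}_{(1,1,0,\dots,0)}$ to $E^{L_1\cap K}_{(2,0,\dots,0)}$ ``does not enlarge the relevant set of $L_1\cap K$-types'' because of \eqref{K-Speh} conflates the $K$-types of the Speh module with the weights of $S^\bullet(\fu\cap\fp)$; these are separate ingredients in \eqref{eq-blattner2}.

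The invariant that does the work in the paper is not a Euclidean norm but the \emph{coordinate sum} $|\cdot|_1$. One writes $\mathcal T_u=(t_1,\dots,t_\ell;0,\dots,0)$ and records that the passage $\mathcal T\mapsto\mathcal S$ adds the same total $R$ to both the $L_1$-block and the $L_2$-block: $|\mathcal S_u|_1=|\mathcal T_u|_1+R$ and $|(\mathcal S_1,\dots,\mathcal S_t)|_1=|(\mathcal T_1,\dots,\mathcal T_t)|_1+R$. Lemma~3.9 of \cite{DW3} then gives, for any $L\cap K$-constituent $E^{L_1\cap K}_{(\mathcal S_1',\dots,\mathcal S_t')}\boxtimes E^{L_2\cap K}_{(\mathcal S_u';\eta)}$ of $H^q$ with $q>0$, the two inequalities
\[
|(\mathcal S_1',\dots,\mathcal S_t')|_1 < |(\mathcal T_1,\dots,\mathcal T_t)|_1+R,
\qquad
\mathcal S_u'=(s_1',\dots,s_\ell';\sigma)\ \text{with}\ |\sigma|_1\ge R.
\]
On the other side, the only part of $S^\bullet(\fu\cap\fp)$ in \eqref{Smuprimep} that feeds the $L_2\cap K$-factor is $S^d(E^{L_1\cap K}_{(1,0,\dots,0)}\boxtimes E^{L_2\cap K}_{(1,0,\dots,0)})$, whose $L_2\cap K$-constituents $E^{L_2\cap K}_{(\gamma;\eta')}$ satisfy $|\gamma|_1\le d$; matching $\mathcal S_u'$ against $\pi_u\otimes E^{L_2\cap K}_{(\gamma;\eta')}$ forces $d\ge|\gamma|_1\ge R$. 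But then every $L_1\cap K$-type in $Z^{\#}\otimes E^{L_1\cap K}_\theta\otimes S^a(E^{L_1\cap K}_{(2,0,\dots,0)})$ has coordinate sum $|(\mathcal T_1,\dots,\mathcal T_t)|_1+d+2a\ge |(\mathcal T_1,\dots,\mathcal T_t)|_1+R$, contradicting the strict inequality above. So the contradiction is a degree count, not a norm estimate; once you replace your norm argument with this $|\cdot|_1$ bookkeeping, the rest of your outline goes through.
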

\begin{proof}
First of all, we recall the construction of $\mathcal{S} = (\mathcal{S}_1,\dots,\mathcal{S}_t;\mathcal{S}_u)$
from $(\mathcal{T}_1,\dots,\mathcal{T}_t)$ and $\mathcal{T}_u$ stated in Theorem 3.1 of \cite{DW3}. Namely, by writing
$$\mathcal{T}_u = (t_1,\dots,t_{\ell};0, \dots, 0),$$
so that  $t_1 \geq \dots \geq t_{\ell} > 0$ are the positive entries of the highest weight
of the spin lowest $K$-type of $\pi_u$ with
$$\ell = \begin{cases}
q & \text{if}\ \pi_u = \pi_{n,b}^{\epsilon_1,\epsilon_2}\ \text{if}\ n\ \text{is odd}\\
1 & \text{if}\ \pi_u = \pi_{n,min}^{\epsilon_1,\epsilon_2}\ \text{if}\ n\ \text{is even}\\
q+1 & \text{if}\ \pi_u = \pi_{n,b}^{\epsilon_1,\epsilon_2,\epsilon_3}\ \text{if}\ n\ \text{is even}
\end{cases},$$
then
$$\mathcal{S}_u = (t_1, \dots, t_{\ell};\nu_{i_j}, \dots, \nu_{i_1}, 0 \dots, 0)$$
and each $\mathcal{S}_{i_k}$ is obtained from $\mathcal{T}_{i_k}$ by adding
the partition $\nu_{i_k}$ at the appropriate rows of $\mathcal{T}_{i_k}$. Otherwise,
if $p \notin \{i_1,\dots,i_j\}$, we have $\mathcal{S}_p = \mathcal{T}_p$.

For all ${\bf v} \in \mathbb{R}^n$, let $| {\bf v} |_1$ be the sum of its coordinates.
Then we have
$$|\mathcal{S}_u|_1 = |\mathcal{T}_u|_1 + R, \quad \quad |(\mathcal{S}_1,\dots,\mathcal{S}_t)|_1 = |(\mathcal{T}_1,\dots,\mathcal{T}_t)|_1 + R$$
where $R := \sum_{k=1}^j |\nu_{i_k}|_1$.
\medskip

By the description of  $H^q(\fu \cap \mathfrak{k}, E_{(\mathcal{S};\eta)})$ for $q > 0$ given in
Lemma 3.9 of \cite{DW3}, suppose
$E^{L_1\cap K}_{(S_1',\dots,S_t')} \boxtimes E^{L_2 \cap K}_{(S_u';\eta)}$ appears in
$H^q(\fu \cap \mathfrak{k}, E_{(\mathcal{S};\eta)})$, then $\mathcal{S}_u'$ must be of
the form
\begin{equation} \label{eq-claim1}
\mathcal{S}_u' = (s_1', \dots, s_{\ell}'; \sigma), \quad \quad \quad |\sigma|_1 \geq R
\end{equation}
(this is due to the fact that
for $\mathcal{S} = (\mathcal{S}_1, \dots , \mathcal{S}_t; \mathcal{S}_u)$,
the coordinates of $(\mathcal{S}_1, \dots , \mathcal{S}_t)$
are larger than or equal to that of $\mathcal{S}_u$). Moreover, we have
\begin{equation} \label{eq-claim2}
|(\mathcal{S}_1',\dots,\mathcal{S}_t')|_1 < |(\mathcal{S}_1,\dots,\mathcal{S}_t)|_1 = |(\mathcal{T}_1,\dots,\mathcal{T}_t)|_1 + R.
\end{equation}

\medskip

We now focus on the $L_2 \cap K$ factor in $( Z^{\#} \boxtimes \pi_u) \otimes S^{\bullet}(\fu \cap \fp)$.
Note that the only term of $S^{\bullet}(\fu \cap \fp)$ given in \eqref{Smuprimep}
that contributes to the $L_2 \cap K$-component is $S^{d}(E^{L_1 \cap K}_{(1,0,\dots,0)} \boxtimes E^{L_2 \cap K}_{(1,0,\dots,0)})$.
By the arguments below Equation (11) in \cite{DW3}, the $L_2 \cap K$-components
in the decomposition of $S^{d}(E^{L_1 \cap K}_{(1,0,\dots,0)} \boxtimes E^{L_2 \cap K}_{(1,0,\dots,0)})$
into irreducible $L \cap K$-types are of the form $E^{L_2 \cap K}_{(\gamma;\eta')}$ with $d \geq |\gamma|_1$.
Consequently, in order
$E^{L_2 \cap K}_{(\mathcal{S}_u';\eta)}$ to show up in the second component of
$( Z^{\#} \boxtimes \pi_u) \otimes S^{\bullet}(\fu \cap \fp)$, one must have
$$[\pi_u \otimes E^{L_2 \cap K}_{(\gamma;\eta')}: E^{L_2 \cap K}_{(S_u';\eta)}] > 0.$$
More precisely, there exists a $L_2 \cap K$-type $E^{L_2 \cap K}_{(u_1, \dots, u_{\ell}; 0, \dots, 0; \eta'')}$
in $\pi_u$ such that
$$[E^{L_2 \cap K}_{(u_1, \dots, u_{\ell}; 0, \dots, 0; \eta'')} \otimes E^{L_2 \cap K}_{(\gamma;\eta')}: E^{L_2 \cap K}_{(S_u';\eta)}] > 0.$$
By \eqref{eq-claim1},  one must have $|\gamma|_1 \geq R$ in order
for above to hold. Hence we have $d \geq |\gamma|_1 \geq R$.

\medskip

Going back to the first component of $L \cap K$, note that the $L_1 \cap K$-components
in the decomposition of $S^{d}(E^{L_1 \cap K}_{(1,0,\dots,0)} \boxtimes E^{L_2 \cap K}_{(1,0,\dots,0)})$
into irreducible $L \cap K$-types are of the form $E^{L_1 \cap K}_{\theta}$ with $|\theta|_1 = d \geq R$.
Therefore, in order for \eqref{eq-impossible} to have non-zero multiplicity, one must have
$$[Z^{\#} \otimes  E^{L_1 \cap K}_{\theta} \otimes S^a(E^{L_1 \cap K}_{(2,0,\dots,0)}): E^{L_1 \cap K}_{(\mathcal{S}_1',\dots,\mathcal{S}_t')}] > 0$$
for some nonnegative integer $a$. All $L_1 \cap K$-types $E^{L_1 \cap K}_{(z_1,\dots,z_k)}$ appearing in
$Z^{\#} \otimes  E^{L_1 \cap K}_{\theta} \otimes S^a(E^{L_1 \cap K}_{(2,0,\dots,0)})$ must satisfy
$$|(z_1,\dots,z_k)|_1 = |(\mathcal{T}_1,\dots,\mathcal{T}_t)|_1 + d + 2a \geq |(\mathcal{T}_1,\dots,\mathcal{T}_t)|_1 + R,$$
yet $|(\mathcal{S}_1',\dots,\mathcal{S}_t')|_1 < |(\mathcal{T}_1,\dots,\mathcal{T}_t)|_1 + R$ by \eqref{eq-claim2}.
Therefore,
$$\left[( Z^{\#} \boxtimes \pi_u) \otimes S^k(\fu \cap \fp) : E^{L_1\cap K}_{(S_1',\dots,S_t')} \boxtimes E^{L_2 \cap K}_{(S_u';\eta)}\right] = 0$$
for all $L \cap K$-types $E^{L_1\cap K}_{(S_1',\dots,S_t')} \boxtimes E^{L_2 \cap K}_{(S_u';\eta)}$
appearing in $H^q(\fu \cap \mathfrak{k}, E_{(\mathcal{S};\eta)})$ for all $q > 0$, and the proposition follows.
\end{proof}

Let us record the following interesting result, which follows from Theorem \ref{thm-oddunip} and Theorem \ref{thm-main}.

\begin{corollary}\label{cor-thm-main}
When $n$ is odd, any irreducible unitary representation of $GL(n, \bR)$ whose infinitesimal character satisfies \eqref{eq-HP2} has non-zero Dirac cohomology.
\end{corollary}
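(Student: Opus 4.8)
The plan is to reduce, via Vogan's classification, to the unipotent constituent of $\pi$ and then apply Theorem~\ref{thm-oddunip}. Let $\pi$ be an irreducible unitary representation of $GL(n,\bR)$ with $n$ odd whose infinitesimal character satisfies \eqref{eq-HP2}; by the discussion following \eqref{eq-HP2} we may take $\Lambda$ of the form \eqref{eq-HP}, so that the coordinates of $\Lambda|_{\ft}$ are pairwise distinct positive integers. By Vogan's classification (recalled above, leading to \eqref{eq-unitary2}), $\pi$ can be written as
$$\pi\cong\mathrm{Ind}_{\prod_{i=1}^{t}GL(2k_i,\bR)\times GL(r,\bR)}^{GL(n,\bR)}\Big(\bigotimes_{i=1}^{t}S_{k_i,m_i+1}\otimes\pi_u\Big),$$
with associated chains $\Lambda=\bigcup_{i=1}^{t}\mathcal{A}_i\cup\mathcal{U}$. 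Since $n=2\sum_{i=1}^{t}k_i+r$ is odd, $r$ is odd as well, so $\pi_u$ is a genuine special unipotent representation of $GL(r,\bR)$ with $r$ odd.

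Next I would show $H_D(\pi_u)\neq 0$. The u-chain $\mathcal{U}$ is a sub-collection of the coordinates of $\Lambda$, hence consists of pairwise distinct positive integers; equivalently, the infinitesimal character of $\pi_u$ satisfies \eqref{eq-HP} for $GL(r,\bR)$. The proof of Proposition~\ref{prop-unip} uses only this, and it shows that a special unipotent representation of $GL(r,\bR)$, $r$ odd, with infinitesimal character of the form \eqref{eq-HP} must be $\mathrm{triv}$, $\mathrm{sgn}(\det)$, or $\pi_{r,b}^{\epsilon_1,\epsilon_2}$ (in the notation of \eqref{eq-unipodd}, with $n$ replaced by $r$) for some $b<r/2$ and $\epsilon_1,\epsilon_2\in\{0,1\}$. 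Now $\mathrm{triv}$ and $\mathrm{sgn}(\det)$ plainly have non-zero Dirac cohomology, and every $\pi_{r,b}^{\epsilon_1,\epsilon_2}$ has non-zero Dirac cohomology by Theorem~\ref{thm-oddunip}. Hence $H_D(\pi_u)\neq 0$ in every case.

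Finally, since the chains of $\pi$ satisfy \eqref{eq-HP}, Theorem~\ref{thm-main} applies and gives $H_D(\pi)\neq 0$ if and only if $H_D(\pi_u)\neq 0$; together with the previous paragraph this yields $H_D(\pi)\neq 0$, proving the corollary. I do not expect a genuine obstacle: the content is already packaged into the earlier results, the crucial input being the contrast with the even case --- for $n$ odd \emph{no} admissible unipotent is lost (Theorem~\ref{thm-oddunip}), unlike Theorems~\ref{thm-unipeven2} and~\ref{thm-unipeven3}. The only point requiring some care is the bookkeeping in the second paragraph, namely that $\pi_u$ genuinely inherits an infinitesimal character of the form \eqref{eq-HP} --- equivalently that no coordinate $0$ enters $\mathcal{U}$ --- which is immediate once one recalls that $\mathcal{U}$ is a sub-multiset of the coordinates of $\Lambda$.
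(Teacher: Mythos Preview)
Your proof is correct and follows the same approach as the paper, which simply records the corollary as an immediate consequence of Theorem~\ref{thm-oddunip} and Theorem~\ref{thm-main}. You have merely spelled out the bookkeeping (that $r$ is odd, that $\mathcal{U}$ inherits distinct positive coordinates from $\Lambda$, and the invocation of Proposition~\ref{prop-unip}) which the paper leaves implicit.
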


\section{Classification of FS-Scattered Representations}

We recall the notion of interlaced chains as given in \cite{DW3}.

\begin{definition}
\begin{enumerate}
\item Two chains $\mathcal{X}_1 = [A, \dots, a]_{x_1}$, $\mathcal{X}_2 = [B, \dots, b]_{x_2}$ ($x_i = s \mbox{ or } u$)
are {\bf linked} if
the entries of $\mathcal{X}_1$ and $\mathcal{X}_2$ are disjoint, and either one of the following holds:
\begin{itemize}
\item $A > B > a$; or
\item $B > A > b$; or
\item $\{\mathcal{X}_1$, $\mathcal{X}_2\}$ $=$ $\{[2k, \dots, 4, 2]_u, [1]_s\}$.
\end{itemize}
\item We say a union of chains  $\displaystyle \bigcup_{i \in I} \mathcal{X}_i$ is {\bf interlaced} if for each $i \neq j$ in $I$,
there exists indices $i = i_0, i_1, \dots, i_k = j$ in $I$ such that $\mathcal{X}_{i_{l-1}}$ and
$\mathcal{X}_{i_{l}}$ are linked for all $1 \leq l \leq k$ (by convention, we also let the single chain $\mathcal{X}$ to be interlaced). Furthermore, we say the interlaced chains {\bf u-interlaced} if it contains exactly one u-chain.
\end{enumerate}
\end{definition}

\begin{example} \label{eg-interlaced}
For $GL(n,\mathbb{R})$ when $n$ is odd, the u-chains are of the form
$[n-1, \dots, 4,2]_u$ or $[p-1, p-3,\dots, q+2, q, q-1, \dots, 2,1]_u$ by \eqref{eq-oddinfl}.
We now list all u-interlaced chains for $n = 5$:
$$[4,2]_u,\quad \quad \quad    [3,1]_u,\quad \quad \quad    [2,1]_u,\quad \quad \quad   \begin{aligned} & [1]_s \\ [2]_u \end{aligned}$$
As for $n = 7$, the u-interlaced chains are
$$[6,4,2]_u,\quad \quad \quad   [5,3,1]_u,\quad \quad \quad   [4,2,1]_u,\quad \quad \quad   [3,2,1]_u,\quad \quad \quad   \begin{aligned} &&[3]_s \\ [4 && && 2]_u \end{aligned},$$
$$ $$
$$\begin{aligned} && & [1]_s \\ [4 && 2]_u \end{aligned},\quad \quad \quad   \begin{aligned} &&[2]_s \\ [3 && && 1]_u \end{aligned},\quad \quad \quad  \begin{aligned} && [3 && && 1]_s \\ && &&[2]_u \end{aligned}$$
\smallskip

As for $GL(n,\mathbb{R})$ when $n$ is even, the u-chains are of the form
$[n-1, \dots, 3,1]_u$ or $[p-1, p-3,\dots, q+2, q, q-1 \dots, 1,0]_u$ by \eqref{eq-eveninfl}.
We now list all u-interlaced chains for $n = 6$:
$$[5,3,1]_u,\quad \quad \quad   [4,2,0]_u,\quad \quad \quad  [3,1,0]_u,\quad \quad \quad   [2,1,0]_u,$$
$$ $$
$$\begin{aligned} &&[2]_s \\ [3 && && 1]_u \end{aligned},\quad \quad \quad \begin{aligned} &&[1]_s \\ [2 && && 0]_u \end{aligned},\quad \quad \quad   \begin{aligned} && [2 && && 0]_s \\ && &&[1]_u \end{aligned}$$
As for $n = 8$, the u-interlaced chains are
$$[7,5,3,1]_u,\quad \quad \quad   [6,4,2,0]_u,\quad \quad \quad  [5,3,1,0]_u,\quad \quad \quad  [4,2,1,0]_u,\quad \quad \quad   [3,2,1,0]_u,$$
$$ $$
$$ \begin{aligned} &&[4]_s \\ [5 && && 3 && && 1]_u \end{aligned},\quad \quad \quad \begin{aligned} && && &&[2]_s \\ [5 && && 3 && && 1]_u \end{aligned},\quad \quad \quad \begin{aligned} &&[3]_s \\ [4 && && 2 && &&0]_u \end{aligned}$$
$$ $$
$$\begin{aligned} && && &&[1]_s \\ [4 && && 2 && && 0]_u \end{aligned},\quad \quad \quad  \begin{aligned} &&[2]_s \\ [3 && && 1 && &&0]_u \end{aligned}, \quad \quad \quad
\begin{aligned} &&[4 && && 2]_s \\ &&  && [3 && && 1]_u \end{aligned},$$
$$ $$
$$  \begin{aligned} &&[2 && && 0]_s \\ [3 && && 1]_u \end{aligned},\quad \quad \quad \begin{aligned} &&[3&& && 1]_s \\ &&  && [2 && && 0]_u \end{aligned},\quad \quad \quad  \begin{aligned} && [4 && && 2 && && 0]_s \\ && && && &&[1]_u \end{aligned}.$$
\smallskip

Note that in both cases, the number of u-interlaced chains for $n + 2$ is exactly double of that of $n$. Indeed, for
$n \geq 5$, one can obtain all u-interlaced chains for $GL(n + 2,\mathbb{R})$ from those of
$GL(n, \mathbb{R})$ by adding an extra odd or even coordinate to a u-interlaced chain of $GL(n, \mathbb{R})$
using Algorithm 3.6 of \cite{DW1}. For instance, the chain $\begin{aligned} &&[1]_s \\ [2 && && 0]_u \end{aligned}$ in $GL(6,\mathbb{R})$
gives rise to two chains
$$\begin{aligned} &&[\mathbf{3}&& && 1]_s \\ &&  && [2 && && 0]_u\end{aligned}, \quad \quad \begin{aligned} && && &&[1]_s \\ [\mathbf{4} && && 2 && && 0]_u \end{aligned} \quad \quad \text{in}\ \ GL(8,\mathbb{R}).$$
And the chain $\begin{aligned} & [1]_s \\ [2]_u \end{aligned}$ in $GL(5,\mathbb{R})$ gives rise to two chains
$$\begin{aligned} && [\mathbf{3} && && 1]_s \\ && &&[2]_u \end{aligned}, \quad \quad \begin{aligned} && & [1]_s \\ [\mathbf{4} && 2]_u \end{aligned} \quad \quad \text{in}\ \ GL(7,\mathbb{R}).$$
\end{example}

The following result is taken from Paul's lecture \cite{Pa}.

\begin{theorem}\label{thm-Vogan} \emph{(Vogan \cite{V2})}
Let $G$ be a real reductive Lie group as in \cite{ALTV}.  Let $p=(x, \lambda, \nu)$ be the \texttt{atlas} parameter of an irreducible $(\fg, K)$-module $\pi$.
Let $S$ be the support of $x$, and $\fq(x)$ be the $\theta$-stable parabolic subalgebra given by the pair $(S, x)$, with Levi factor $L$.
Then  $\pi$ is cohomologically induced, in the weakly good range,
from an irreducible $(\fl, L\cap K)$-module $\pi_L$ with parameter $p_L=(y, \lambda-\rho(\fu), \nu)$. Here $y$ is the KGB element of $L$ corresponding to the KGB element $x$ of $G$. Moreover, $\fq(x)$ is the \textbf{minimum} $\theta$-stable parabolic subalgebra based on which $p$ can be realized as a cohomologically induced module in the weakly good range.

In particular, $p$ \underline{cannot} be realized as a cohomologically induced module from any proper $\theta$-stable parabolic in the weakly good range if and only if each simple root shows up in the support $S$ of the KGB element $x$. That is, if and only if $p$ is fully supported (FS for short).
\end{theorem}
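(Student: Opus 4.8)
The plan is to obtain the theorem as a consequence of Vogan's theory of cohomological induction in the weakly good range, together with the translation of \texttt{atlas} parameters under that functor. I recall from \cite{ALTV} that a KGB element $x$ determines an involution $\theta_x$ of the based root datum, with respect to which every simple root is imaginary, real or complex, and that the support $S=S(x)$ is the set of simple roots occurring in the cross-action / Cayley-transform data describing the orbit $\mathcal{O}_x$ (equivalently, it is read off from how $\mathcal{O}_x$ fibers over the partial flag varieties). The first thing I would record is the structural fact that the pair $(S,x)$ really does produce a $\theta$-stable parabolic: by the definition of $S$ one can pick a $\theta_x$-fixed $\xi\in\fh^*$ that vanishes on the simple roots in $S$ and is strictly positive on the remaining simple roots, so that $\fl:=Z_{\fg}(\xi)$ has $S$ as its simple roots, $\fu:=\bigoplus_{\langle\alpha,\xi\rangle>0}\fg_{\alpha}$ is $\theta$-stable, and $\fq(x):=\fl+\fu$ is a $\theta$-stable parabolic --- equal to $\fg$ exactly when $S$ is the full set of simple roots. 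With $\fq(x)$ in hand, the theorem splits into an existence statement (first two sentences) and a minimality statement (last two).

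For the existence statement I would take $y$ to be the KGB element of $L$ through which $\mathcal{O}_x$ meets the flag variety of $L$, and let $\pi_L$ be the irreducible $(\fl, L\cap K)$-module with parameter $p_L=(y,\lambda-\rho(\fu),\nu)$. The reason for the shift is that $\rho(\fu)$ cancels in \eqref{weakly-good}: $\mathrm{Re}\langle(\lambda-\rho(\fu))+\rho(\fu),\alpha\rangle=\mathrm{Re}\langle\lambda,\alpha\rangle\ge 0$ for $\alpha\in\Delta(\fu,\fh)$, and the point of taking $S$ to be exactly the \emph{support} is that these inequalities hold, the ones forced to be equalities being precisely the singular directions of the infinitesimal character (so one lands in the weakly good, not necessarily good, range). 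Then \cite{KnV} (or \cite{V1}) gives that $\mathcal{R}_{\fq(x)}$ is concentrated in degree $\dim(\fu\cap\fp)$ on $\pi_L$ and outputs an irreducible or zero $(\fg,K)$-module with the infinitesimal character of $\pi$; and by the parameter dictionary for weakly good cohomological induction (\cite{V2}, see also \cite{Pa}, as implemented in \texttt{atlas}), $\mathcal{R}_{\fq(x)}$ sends $(y,\mu,\nu)$ to $(x,\mu+\rho(\fu),\nu)$ whenever $\mu$ is weakly good, so the output has parameter $(x,\lambda,\nu)=p$ and is therefore $\pi$ itself (in particular nonzero).

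For minimality I would argue as follows. Suppose $\pi\cong\mathcal{R}_{\fq'}(\pi_{L'})$ is cohomologically induced in the weakly good range from $\fq'=\fl'+\fu'$. Every root of $\fu'$ is imaginary or complex for $\theta_x$ (it cannot be real, since $\fu'$ is $\theta$-stable and $-\alpha\notin\Delta(\fu')$ for $\alpha\in\Delta(\fu')$), so passing through the $\theta$-stable nilradical $\fu'$ requires no Cayley transforms; hence the data defining the support of $x$ in $G$ is the same as that defining the support of $y'$ in $L'$, giving $S(x)=S_{L'}(y')\subseteq\{\text{simple roots of }\fl'\}$. Therefore $\fl=\fl(S(x))\subseteq\fl'$ and $\fq(x)\subseteq\fq'$, i.e. $\fq(x)$ is the minimum $\theta$-stable parabolic presenting $\pi$ in the weakly good range. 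Finally, if every simple root lies in $S$ then $\fl(S)=\fg$, so the only $\theta$-stable parabolic presenting $\pi$ in the weakly good range is $\fg$ itself, i.e. none is proper; conversely, if some simple root is missing from $S$ then $\fq(x)$ is already a \emph{proper} $\theta$-stable parabolic presenting $\pi$, by the existence statement. This is the asserted equivalence with being fully supported.

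The step I expect to be the real obstacle is the parameter dictionary used twice above: that cohomological induction in the weakly good range acts on \texttt{atlas} parameters by $(y,\mu,\nu)\mapsto(x,\mu+\rho(\fu),\nu)$, and that a given $x$ arises this way from $\fq$ precisely when $S(x)$ sits inside the Levi of $\fq$. This is the technical core of Vogan's theorem \cite{V2}; proving it properly needs a bottom-layer analysis of $\mathcal{R}_{\fq}(\pi_L)$, the irreducibility and vanishing theorems in the weakly good range, and an identification of the resulting Langlands data --- equivalently, it is what the \texttt{atlas} software computes. For the present paper the statement is only needed for the explicit Speh and special unipotent inducing data of $GL(n,\bR)$, for which the KGB element, its support, and the corresponding $\theta$-stable parabolic can be written down and checked directly; but the clean general formulation above is Vogan's.
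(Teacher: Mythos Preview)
The paper does not prove this theorem: it is stated as a result ``taken from Paul's lecture \cite{Pa}'' and attributed to Vogan \cite{V2}, with no argument given. So there is nothing in the paper to compare your proposal against; the authors use Theorem~\ref{thm-Vogan} as a black box (specifically, in the proof of Theorem~\ref{thm-scattered} and in Remark~\ref{rmk-spehs}) and never revisit its internals.

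As for your sketch itself: the overall shape is reasonable, and you are right to isolate the ``parameter dictionary'' --- that cohomological induction in the weakly good range sends $(y,\mu,\nu)$ to $(x,\mu+\rho(\fu),\nu)$, and conversely that any weakly good presentation forces $S(x)$ into the Levi --- as the genuine content. A couple of points would need tightening in a real proof. First, your verification of weak goodness, $\mathrm{Re}\langle\lambda,\alpha\rangle\ge 0$ for $\alpha\in\Delta(\fu,\fh)$, is asserted rather than derived; in the \texttt{atlas} normalization the parameter $\lambda$ is only dominant for the \emph{imaginary} roots, and dominance for the full $\Delta(\fu,\fh)$ comes from the specific way the support $S$ is defined (via which simple reflections move $x$), not from a generic positivity of $\lambda$. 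Second, your minimality argument (``passing through $\fu'$ requires no Cayley transforms, hence $S(x)=S_{L'}(y')$'') is morally correct but conflates the support with the set of simple roots used in a specific reduced expression for $x$; making this precise again requires the exact \texttt{atlas} definition of support and the compatibility of that definition with the KGB correspondence $x\leftrightarrow y'$ under cohomological induction. You correctly flag both of these as living inside the cited theorem of Vogan, so your proposal is an honest outline rather than a gap --- but it is an outline of a result the paper simply imports.
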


\begin{theorem} \label{thm-scattered}
Let $\pi$ be an irreducible, unitary representation with nonzero Dirac cohomology. Then
$\pi$ is a FS-scattered representation if and only if its corresponding chains are u-interlaced.
\end{theorem}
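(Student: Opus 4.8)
The plan is to test ``fully supported'' against Theorem \ref{thm-Vogan} and then translate its content into the combinatorics of chains. Fix $\pi$ irreducible unitary with $H_D(\pi)\neq 0$. Then its infinitesimal character satisfies \eqref{eq-HP2} by Corollary \ref{cor-HP}, so by Vogan's classification $\pi$ has the form \eqref{eq-unitary2} and carries chains $\Lambda=\bigcup_{i=1}^t\mathcal{A}_i\cup\mathcal{U}$ satisfying \eqref{eq-HP}, with a \emph{single} u-chain $\mathcal{U}$ coming from $\pi_u$ unless $\pi_u=\mathrm{triv}$ or $\mathrm{sgn}$ (possible only for $n$ even), in which case there is none. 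I would first reduce ``FS-scattered'' to ``fully supported'': by Remark \ref{rmk-spehs} together with the analysis of cohomological induction below, the non-fully-supported members of the Dirac series of $GL(n,\bR)$ are exactly the generalized-Speh pieces, which sit in infinite families, so the scattered part of the Dirac series is precisely its fully supported part. By Theorem \ref{thm-Vogan}, $\pi$ is fully supported iff it is not $\mathcal{R}^S_{\fq}(\pi_L)$ in the weakly good range for any proper $\theta$-stable $\fq=\fl+\fu$, the minimal such being $\fq(x)$; and every $\theta$-stable Levi of $GL(n,\bR)$ has the shape $\bigoplus_j\mathfrak{gl}(a_j,\bC)\oplus\mathfrak{gl}(b,\bR)$ with $2\sum_j a_j+b=n$ and \emph{exactly one} real factor ($b=0$ allowed only when $n$ is even).

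The core step is a dictionary between realizations of $\pi$ by cohomological induction and decompositions of $\Lambda$. I claim that $\pi$ is $\mathcal{R}^S_{\fq}(Z\boxtimes\pi_u)$ in the weakly good range for $\fl=\bigoplus_j\mathfrak{gl}(a_j,\bC)\oplus\mathfrak{gl}(b,\bR)$ exactly when $\Lambda$ splits as a disjoint union of blocks $\mathcal{B}_1,\dots,\mathcal{B}_c$, each consisting of s-chains only, each \emph{unlinked} from every chain outside it and with $|\mathcal{B}_j|=2a_j$, together with one further block of size $b$ carrying $\mathcal{U}$. This generalizes Remark \ref{rmk-spehs} (the case $c=1$, $b=0$) and should be proved by the ``independence of polarization'' results of \cite{V3} combined with the bookkeeping of \cite{DW3}: an unlinked block of s-chains can be absorbed into a single $\mathfrak{gl}(a_j,\bC)$-factor precisely because ``unlinked'' is the combinatorial shadow of the weakly good inequality \eqref{weakly-good} across that factor, whereas two linked chains interlace their infinitesimal characters and cannot be peeled apart; and the unipotent $\pi_u$, living only on the unique real factor, forces $\mathcal{U}$ to stay there. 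Hence $\fq(x)$ corresponds to the finest such decomposition, and $\fl(x)=\fg$ iff $\Lambda$ admits no proper decomposition of this type.

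Granting the dictionary, the theorem becomes combinatorial. If $\Lambda$ has no u-chain, then $\pi=\pi_s$ is a generalized Speh module and all of $\Lambda$ peels off onto a proper $\mathfrak{gl}(k,\bC)$, so $\pi$ is not fully supported. If $\Lambda$ has a u-chain but is disconnected, a connected component not containing $\mathcal{U}$ is a proper unlinked block of s-chains, so again $\pi$ is not fully supported. If $\Lambda$ is interlaced with exactly one u-chain, then every s-chain is joined to $\mathcal{U}$ by a chain of links — through the special pair $\{[2k,\dots,4,2]_u,[1]_s\}$ whenever a would-be separated piece is just $[1]_s$ — so no block of s-chains can be split off, $\fl(x)=\fg$, and $\pi$ is fully supported. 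In every case $\pi$ is FS-scattered iff $\Lambda$ is u-interlaced, as claimed.

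The main obstacle is the dictionary of the second paragraph, and within it the exotic linking $\{[2k,\dots,4,2]_u,[1]_s\}$: one must show that adjoining the limit-of-discrete-series $S_{1,1}$ of $GL(2,\bR)$ to the trivial or sign character of the complementary $GL(n-2,\bR)$ (with $n$ odd) yields a \emph{fully supported} representation of $GL(n,\bR)$, not one cohomologically induced from the proper parabolic with Levi $\mathfrak{gl}(1,\bC)\oplus\mathfrak{gl}(n-2,\bR)$. This single case does not follow from the generic weakly-good bookkeeping and must be checked directly from Theorem \ref{thm-Vogan} by computing the support of the \texttt{atlas} parameter; everything else in the dictionary is a matter of carefully transporting the weakly good range condition across complex Levi factors, in the style of \cite{DW3}.
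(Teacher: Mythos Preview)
Your approach is essentially the same as the paper's: both use Theorem~\ref{thm-Vogan} to reduce ``FS-scattered'' to ``not cohomologically induced in weakly good range from a proper $\theta$-stable Levi'', then invoke the independence-of-polarization results of \cite{V3} (Theorem~17.6) to translate this into chain combinatorics.

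A few remarks on the differences:

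(1) Your aside in the first paragraph that ``the non-fully-supported members of the Dirac series are exactly the generalized-Speh pieces'' is false: any $\pi$ whose chains contain a u-chain but are not u-interlaced (e.g.\ a u-chain sitting strictly below an unlinked block of s-chains) is non-fully-supported yet not a generalized Speh module. Fortunately your actual argument in paragraphs~2--3 does not use this claim, so this is cosmetic.

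(2) The paper does not set up the full biconditional dictionary you propose. For the direction ``not u-interlaced $\Rightarrow$ not FS'' it simply takes the maximal interlaced component containing $\mathcal{U}$, peels off everything else as a single $\mathfrak{gl}(p,\bC)$-factor via \eqref{pi-FS}, and observes that non-linking forces the smallest peeled coordinate to exceed the largest remaining one, so the induction is in the \emph{good} (not merely weakly good) range. This is more economical than characterizing all weakly good realizations. For the converse ``u-interlaced $\Rightarrow$ FS'' the paper is just as terse as you are, appealing only to ``a similar argument along with induction in stages''; your explicit isolation of the special pair $\{[2k,\dots,4,2]_u,[1]_s\}$ as the one case where the generic weakly-good bookkeeping does not immediately apply is actually a clarification over the paper's treatment.

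(3) In your dictionary, the inducing module on the real factor need not be $\pi_u$ itself but rather a general unitary module $Z_2$ of $GL(b,\bR)$ of the form \eqref{eq-unitary2}; this is exactly what the paper does in \eqref{pi-FS}. You implicitly allow this when you speak of ``one further block of size $b$ carrying $\mathcal{U}$'', but writing the inducing module as $Z\boxtimes\pi_u$ at the start of that paragraph obscures it.
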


\begin{proof}
As mentioned in Remark \ref{rmk-spehs}(b), if the chains corresponding to $\pi$ does
not contain $\mathcal{U}$, then it must be a generalized Speh module which is always
cohomologically induced from a $\theta$-stable parabolic subalgebra in the weakly good
range.

Similarly, if the chains of $\pi$ are not interlaced, then suppose
$$\bigcup_{i=1}^t \mathcal{A}_i \cup \mathcal{U} = \bigcup_{j\in J} \mathcal{C}_j \cup \left(\bigcup_{k\in K} \mathcal{C}_k \cup \mathcal{U}\right) ,$$
where $\bigcup_{k \in K} \mathcal{C}_k \cup \mathcal{U}$ are the longest interlaced chains containing $\mathcal{U}$.
Let $p_j := \# \mathcal{C}_j$, $q_k := \# \mathcal{C}_k$ and $p := \sum_{j \in J} p_j$ and $q := \sum_{k \in K} q_k$. By Theorem 17.6 of \cite{V3}, one can express $\pi$ as
\begin{equation}\label{pi-FS}
\pi \cong \mathcal{R}^S_{\fq'}(Z_1 \boxtimes Z_2),
\end{equation}
where
\begin{itemize}
\item $\fq' = \fl' + \fu'$ is the $\theta$-stable parabolic subalgebra with $\fl' = \mathfrak{gl}(p,\mathbb{C}) + \mathfrak{gl}(2q+r,\mathbb{R})$;
\item $Z_1 = \mathrm{Ind}_{\prod_{j \in J} GL(p_j,\mathbb{C})}^{GL(p,\mathbb{C})}(\bigotimes_{j \in J} {\det}^{\nu_j})$ corresponds to the chains
$\bigcup_{j\in J} \mathcal{C}_j'$, where all $\mathcal{C}_j'$ are translations from $\mathcal{C}_j$ by the same constant.
\item $Z_2$ is the unitary module of $GL(2q+r,\mathbb{R})$ corresponding to the chains
$\bigcup_{k\in K} \mathcal{C}_k' \cup \mathcal{U}$.
\end{itemize}
Since the chains $\bigcup_{j\in J} \mathcal{C}_j$ and $\bigcup_{k\in K} \mathcal{C}_k \cup \mathcal{U}$
are not interlaced, the smallest coordinate of $\bigcup_{j\in J}\mathcal{C}_j$ and the largest coordinate of $\bigcup_{k\in K} \mathcal{C}_k \cup \mathcal{U}$ must differ by at least one. In other words, the difference between the $p^{th}$ coordinate and the $(p+1)^{th}$ coordinate of the infinitesimal character of $\pi$ must be positive. By \eqref{weakly-good}, this implies that the inducing module in \eqref{pi-FS} is in the  good range. Hence it is not FS-scattered by Theorem \ref{thm-Vogan}.

The remaining representations correspond to u-interlaced chains, and one can check that they cannot be expressed as cohomologically induced modules in weakly good range by a similar argument above along with the induction in stages
given by Section 17 of \cite{V3}. Hence they are all FS-scattered representations by Theorem \ref{thm-Vogan}. The result follows.
\end{proof}

\begin{corollary}
Let $S_n$ be the number of FS-scattered representations of $GL(n,\mathbb{R})$. Then $S_n$ is given by:
\begin{itemize}
\item If $n= 2m+1$ is odd, then
$$S_{2m+1} = 3\cdot 2^m\ \text{for}\ m \geq 1.$$
\item If $n = 2m$ is even, then
$$S_2 = 2,  \quad \quad S_4 = 6, \quad  \quad S_{2m} = 2^{m+1}\ \text{for}\ m \geq 3.$$
\end{itemize}
\end{corollary}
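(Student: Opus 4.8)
The plan is to use Theorem~\ref{thm-scattered} to replace ``counting FS-scattered representations'' by ``counting u-interlaced chains, weighted by the sign characters one may attach to the unipotent piece,'' and then to run a doubling recursion in $n$ together with a small number of base cases computed by hand.

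By Theorem~\ref{thm-scattered}, an irreducible unitary $\pi$ with nonzero Dirac cohomology is FS-scattered exactly when its chains $\bigcup_i\mathcal{A}_i\cup\mathcal{U}$ are u-interlaced. Writing $\pi\cong\Ind(\bigotimes_i S_{k_i,m_i+1}\otimes\pi_u)$, the Speh factors are determined by the $\mathcal{A}_i$ with no extra parameters, whereas $\pi_u$ is determined by $\mathcal{U}$ only up to the choice of sign characters $\epsilon_j$ (and, when $n$ is even, subject to the parity constraints of Theorems~\ref{thm-unipeven2} and~\ref{thm-unipeven3} for nonvanishing Dirac cohomology). Thus $S_n=\sum d(\mathcal{C})$, the sum over u-interlaced chains $\mathcal{C}$, where $d(\mathcal{C})$ is the number of inequivalent irreducible unitary $\pi_u$ with nonzero Dirac cohomology whose $\ft$-restricted infinitesimal character is the u-chain of $\mathcal{C}$. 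Using Proposition~\ref{prop-unip} and the other results of Section~\ref{sec-unipotent}, together with Vogan's irreducibility analysis of these induced modules in \cite{V3}, I would record the values of $d$: for a ``trivial-type'' u-chain $[r-1,r-3,\dots]_u$ one has $d=2$ (the representations $\mathrm{triv}$ and $\mathrm{sgn}(\det)$ of $GL(r,\bR)$); for a ``$\pi_{r,b}$-type'' u-chain with $r$ odd one has $d=4$, since the two inducing $GL$-factors then have unequal sizes and all four sign choices give inequivalent irreducible modules; for $r$ even, the parity conditions together with the identification $\pi_{r,b}^{\epsilon_1,\epsilon_2,\epsilon_3}\cong\pi_{r,b}^{\epsilon_1,\epsilon_3,\epsilon_2}$ that holds when the two trailing $GL(1)$-factors coincide (that is, $b=1$, or the $\min$ case with $r=2$) cut this down to $d=2$, while $\pi_{r,b}^{\epsilon_1,\epsilon_2,\epsilon_3}$ with $b\ge 2$ keeps $d=4$; and $d=0$ for the unique reducible family $\pi_{2,\min}^{\epsilon_1,\epsilon_2}$.

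The recursion comes from Algorithm 3.6 of \cite{DW1} (illustrated in Example~\ref{eg-interlaced}): for $n\ge 5$, ``inserting one extra coordinate'' sends each u-interlaced chain of $GL(n,\bR)$ to exactly two u-interlaced chains of $GL(n+2,\bR)$, and every u-interlaced chain of $GL(n+2,\bR)$ arises in this way exactly once. This operation either enlarges the Speh part of the chain, leaving $\pi_u$ and hence $d$ untouched, or extends the u-chain within its type (a ``trivial-type'' staying ``trivial-type'', a ``$\pi_{r,b}$-type'' becoming a ``$\pi_{r+2,b'}$-type'' with $b'$ chosen so that the dichotomy $d=2$ versus $d=4$ is preserved). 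In both cases the two images have the same $d$ as the source, so $S_{n+2}=2S_n$ for all $n\ge5$. The base cases are the direct enumerations of Example~\ref{eg-interlaced}: for $n$ odd, $S_3=2+4=6$ and $S_5=2+4+4+2=12$; since $2S_3=S_5$ the recursion is already valid from $n=3$, and $S_{2m+1}=3\cdot2^m$ follows for all $m\ge1$. For $n$ even, $S_2=2$ (only $\mathrm{triv}$ and $\mathrm{sgn}(\det)$, since $\pi_{2,\min}^{\epsilon_1,\epsilon_2}$ is reducible), $S_4=2+2+2=6$, and $S_6=2+2+2+4+2+2+2=16=2^4$; the recursion (valid for even $n\ge6$) then gives $S_{2m}=2^{m+1}$ for all $m\ge3$, with $S_2=2$ and $S_4=6$ the only exceptions.

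I expect the real work to be the determination of $d(\mathcal{C})$, not the recursion. One has to invoke \cite{V3} to pin down which parabolically induced $\pi_u$ are irreducible (the sole failure being $\pi_{2,\min}$), keep track of the coincidences $\pi_{r,b}^{\epsilon_1,\epsilon_2,\epsilon_3}\cong\pi_{r,b}^{\epsilon_1,\epsilon_3,\epsilon_2}$, and verify that the parity conditions of Theorems~\ref{thm-unipeven2} and~\ref{thm-unipeven3} are transported through the doubling map so that $d$ is genuinely an invariant of the chain's type. Once these points are settled, and once one checks by hand that all anomalies are confined to $n\le4$, the recursion governs the tail and the stated formulas follow.
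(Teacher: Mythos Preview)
Your approach is essentially the same as the paper's: both reduce via Theorem~\ref{thm-scattered} to counting u-interlaced chains, compute the base cases $S_n$ for $2\le n\le 6$ by hand (the sums $2+4$, $2+2+2$, $2+4+4+2$, $2+2+2+4+2+2+2$ you obtain are exactly those recorded in the paper's tables), and then invoke the doubling map of Example~\ref{eg-interlaced} to get $S_{n+2}=2S_n$ for large $n$. The paper's own proof is terser---it simply asserts the recursion $S_n=2S_{n-2}$ for $n>6$ from the chain-doubling---whereas you make explicit the weight function $d(\mathcal{C})$ and argue that it is preserved by the doubling, which is the content the paper leaves implicit. Your identification of the anomaly at $n=2$ (reducibility of $\pi_{2,\min}$) and the coincidence $\pi_{r,1}^{\epsilon_1,\epsilon_2,\epsilon_3}\cong\pi_{r,1}^{\epsilon_1,\epsilon_3,\epsilon_2}$ are also correct and match what underlies the paper's Tables~\ref{Table-FS-GL4} and~\ref{Table-FS-GL6}.
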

\begin{proof}
For $n = 2$, the FS-scattered representations are
$\mathrm{triv}$, $\mathrm{sgn}(\det)$ and hence $S_2 = 2$.

For $3 \leq n \leq 6$,
the FS-scattered representations of $GL(n, \bR)$ and their corresponding u-interlaced chains given in Theorem \ref{thm-scattered}
are listed in Tables \ref{Table-FS-GL3}--\ref{Table-FS-GL6}, where $\eta \in \{0, 1\}$. Their total numbers are counted as follows:
\begin{align*}
&2+4 = 6,\\
&2+2+2 = 6,\\
&2+4+4+2 = 12,\\
&2+2+2+4+2+2+2 = 16.
\end{align*}
Each representation has a unique LKT as well as a unique spin LKT.

For $n >6$, one can apply the algorithm given in Example \ref{eg-interlaced} to
compute the number of u-interlaced chains of $GL(n,\mathbb{R})$ from that of $GL(n-2,\mathbb{R})$.
Consequently, $S_n = 2 S_{n-2}$ for $n > 6$ and the result follows.
\end{proof}
\begin{remark}
By using  \cite{ALTV} and the finiteness result \cite{D2}, we have explicitly computed all the FS-scattered representations of $SL(n, \bR)$ for $n$ up to $10$ via the software \texttt{atlas} \cite{At}. The number of FS-scattered representations of $SL(n, \bR)$ is exactly one half of $GL(n, \bR)$.
\end{remark}

\begin{table}[H]
\caption{FS-Scattered representations of $GL(3, \bR)$}
\begin{tabular}{|c|c|c|c|}
\hline
{\rm Parameters }& {\rm FS-Scattered Representations} & {\rm LKT} & {\rm Spin LKT}\tabularnewline
\hline
$[2]_u$ & $\mathrm{triv}/\mathrm{sgn}(\det)$ & $(0; \eta)$ & $(0; \eta)$\tabularnewline
\hline
$[1]_u$ & $\pi_{3,0}^{\epsilon_1,\epsilon_2}$ & $(0; \eta)$, $(1;\eta)$ & $(2;\eta)$, $(1;\eta)$ \tabularnewline
\hline
\end{tabular}
\label{Table-FS-GL3}
\end{table}

\begin{table}[H]
\caption{FS-Scattered representations of $GL(4, \bR)$}
\begin{tabular}{|c|c|c|c|}
\hline
{\rm Parameters }& {\rm FS-Scattered Representations} & {\rm LKT} & {\rm Spin LKT}\tabularnewline
\hline
$[3,1]_u$ & $\mathrm{triv}/\mathrm{sgn}(\det)$ & $(0,0; \eta)$ & $(0,0;\eta)$ \tabularnewline
\hline
$[2,0]_u$ & $\pi_{4,min}^{(0,0)/(1,1)}$ & $(0,0; \eta)$ & $(2,0; \eta)$\tabularnewline
\hline
$[1,0]_u$ & $\pi_{4,1}^{(0,0,1)} \cong \pi_{4,1}^{(0,1,0)}$, $\pi_{4,1}^{(1,0,1)}\cong \pi_{4,1}^{(1,1,0)}$ & $(1,0; \eta)$ & $(2,1;\frac{1}{2})$ \tabularnewline
\hline
\end{tabular}
\label{Table-FS-GL4}
\end{table}

\begin{table}[H]
\caption{FS-Scattered representations of $GL(5, \bR)$}
\begin{tabular}{|c|c|c|c|}
\hline
{\rm Parameters }& {\rm FS-Scattered Representations} & {\rm LKT} & {\rm Spin LKT}\tabularnewline
\hline
$[4,2]_u$ & $\mathrm{triv}/\mathrm{sgn}(\det)$ & $(0,0; \eta)$ & $(0,0; \eta)$\tabularnewline
\hline
$[3,1]_u$ & $\pi_{5,0}^{\epsilon_1,\epsilon_2}$ & $(0,0; \eta)$, $(1,0;\eta)$ & $(2,0;\eta)$, $(3,0;\eta)$ \tabularnewline
\hline
$[2,1]_u$ & $\pi_{5,1}^{\epsilon_1,\epsilon_2}$ & $(0,0;\eta)$, $(1,1;\eta)$ & $(2,2;\eta)$, $(3,1;\eta)$ \tabularnewline
\hline
$[1]\cup [2]_u$ & ${\rm Ind}_{GL(2)\times GL(3)}^{G}(S_{1,1} \otimes \mathrm{triv}/\mathrm{sgn}(\det))$ & $(2,0;\eta)$ & $(3,1;\eta)$\tabularnewline
\hline
\end{tabular}
\label{Table-FS-GL5}
\end{table}

\begin{table}[H]
\caption{FS-Scattered representations of $GL(6, \bR)$}
\begin{tabular}{|c|c|c|c|}
\hline
{\rm Parameters }& {\rm FS-Scattered Representations} & {\rm LKT} & {\rm Spin LKT}\tabularnewline
\hline
$[5,3,1]_u$ & $\mathrm{triv}/\mathrm{sgn}(\det)$ & $(0,0,0; \eta)$ & $(0,0,0;\eta)$ \tabularnewline
\hline
$[4,2,0]_u$ & $\pi_{6,min}^{(0,1)/(1,0)}$ & $(1,0,0; \eta)$ & $(3,0,0; \eta)$\tabularnewline
\hline
$[3,1,0]_u$ & $\pi_{6,1}^{(0,0,1)} \cong \pi_{6,1}^{(0,1,0)}$, $\pi_{6,1}^{(1,0,1)}\cong \pi_{6,1}^{(1,1,0)}$ & $(1,0,0; \eta)$ & $(3,2,0;\eta)$ \tabularnewline
\hline
$[2,1,0]_u$ & $\pi_{6,2}^{(0,0,0)/(1,0,0)/(0,1,1)/(1,1,1)}$ & $(0,0,0;\eta)$, $(1,1,0;\eta)$ & $(3,2,1;\frac{1}{2})$ \tabularnewline
\hline
$[2] \cup [3,1]_u$ & ${\rm Ind}_{GL(2)\times GL(4)}^{G}(S_{1,2} \otimes \mathrm{triv}/\mathrm{sgn}(\det))$ & $(3,0,0;\eta)$ & $(4,1,0;\eta)$\tabularnewline
\hline
$[1] \cup [2,0]_u$ & ${\rm Ind}_{GL(2)\times GL(4)}^{G}(S_{1,1} \otimes \pi_{4,min}^{(0,0)/(1,1)})$ & $(2,0,0;\eta)$ & $(3,2,1;\frac{1}{2})$\tabularnewline
\hline
$[2,0] \cup [1]_u$ & ${\rm Ind}_{GL(4)\times GL(2)}^{G}(S_{2,0} \otimes \mathrm{triv}/\mathrm{sgn}(\det))$ & $(2,2,0;\eta)$ & $(3,2,1;\frac{1}{2})$\tabularnewline
\hline
\end{tabular}
\label{Table-FS-GL6}
\end{table}

\bigskip
\centerline{\scshape Acknowledgements}
We are heartily grateful to Professor Vogan for guiding us through his 1986 paper, which is still highly non-trivial for the younger minds even up to today. We thank Professor Paul for helping us with the software \texttt{atlas}. The second author would also like to
thank the Institute of Mathematical Sciences at the Chinese University of Hong Kong for providing an excellent working environment during the epidemic.

\medskip
\centerline{\scshape Funding}
Dong was supported by the National Natural Science Foundation of China (grant 11571097, 2016--2019). Wong is supported by the Presidential Fund of the Chinese University of Hong Kong (Shenzhen), and the National Natural Science Foundation of China (grant 11901491).

\end{document}